\documentclass[reqno,12pt]{article} 
\usepackage{amsmath, amssymb}
\usepackage{amsthm} 
 \newtheorem{theorem}{Theorem}[section]
 \newtheorem{corollary}[theorem]{Corollary}
 \newtheorem{lemma}[theorem]{Lemma}
 \newtheorem{proposition}[theorem]{Proposition}
 \theoremstyle{definition}
 
 \theoremstyle{remark}
 \newtheorem{remark}[theorem]{Remark}
 \newtheorem{example}{Example}
 
 \numberwithin{equation}{section}
\newcommand\on{\operatorname}

\newcommand\Div{\on{div}}

\begin{document}

%
%
%
%
%
%
%
%
%

\pagestyle{myheadings}

\title{Generalized Wintgen inequalities \\ for submanifolds of conformally flat manifolds}
%
\author{Cihan \"{O}zg\"{u}r and Adara M. Blaga}

\date{}

\maketitle

\markboth{{\small\it {\hspace{1cm} Generalized Wintgen inequalities for submanifolds of conformally flat manifolds}}}{\small\it{Generalized Wintgen inequalities for submanifolds of conformally flat manifolds \hspace{1cm}}}

\footnote{2010 \textit{Mathematics Subject Classification}.
53C40; 53C42; 53C15
}
\footnote{ 
\textit{Key words and phrases}.
Generalized Wintgen inequality; DDVV
conjecture; Conformally flat manifold; Manifold of quasi-constant curvature;
Generalized Robertson--Walker spacetime; Conformal change of the metric
}

\begin{abstract}
We obtain generalized Wintgen inequalities for submanifolds of conformally flat manifolds. As applications, we derive corresponding inequalities for submanifolds of
Riemannian manifolds of quasi-constant curvature, generalized Robertson--Walker type warped products, conformally changed real space
forms, and products of real space forms with opposite curvatures. Equality cases are also discussed. Moreover, several obstructions to the existence of minimal submanifolds are obtained from these inequalities.
\end{abstract}



\section{\textbf{Introduction}}

In 1979, Wintgen \cite{Wintgen} proved that the inequality
\begin{equation*}
G\leq \left\Vert H\right\Vert ^{2}-\left\vert G^{\perp }\right\vert
\end{equation*}%
holds for any surface $M^{2}$ in $\mathbb{E}^{4}$, where $G,$ $\left\Vert
H\right\Vert ^{2}$ and $G^{\perp }$ denote the Gauss curvature, the squared
mean curvature, and the normal curvature, respectively. He showed that the
equality holds if and only if the curvature ellipse is a circle. For
surfaces of a real space form $\widetilde{M}^{m+2}(c)$ of constant sectional
curvature $c$, Wintgen inequality was studied by Guadalupe and Rodriguez in
\cite{IVR}, and independently by Rouxel in \cite{Rouxel}. As an extension of
Wintgen's inequality, they proved that
\begin{equation*}
G\leq \left\Vert H\right\Vert ^{2}-\left\vert G^{\perp }\right\vert +c.
\end{equation*}%

The following conjecture for submanifolds in real space forms was given by
De Smet, Dillen, Verstraelen and Vrancken in 1999 \cite{DDDV}. Now, the
conjecture is known as the DDVV conjecture.

\noindent \textit{Conjecture.} Let $f:M^{n}\rightarrow \widetilde{M}^{m}(c)$
be an isometric immersion from $M^{n}$ into a real space form of constant
curvature $c$. Then
\begin{equation}
\rho \leq \left\Vert H\right\Vert ^{2}-\rho ^{\perp }+c,  \label{wint}
\end{equation}%
where $\rho $ and $\rho ^{\perp }$ denote the normalized scalar curvature
and normalized normal scalar curvature of $M^{n}$, respectively.

In \cite{DDDV}, the conjecture was proven for $n$-dimensional submanifolds, $%
n\geq \nolinebreak 2$, in codimension $2$ real space forms of constant
sectional curvature. In \cite{choi}, Choi and Lu showed that the conjecture
is true for all $3$-dimensional submanifolds in arbitrary codimension real
space forms of constant sectional curvature. When the normal connection is
flat, in \cite{Chen-96}, B.-Y. Chen proved that
\begin{equation*}
\rho \leq \left\Vert H\right\Vert ^{2}+c.
\end{equation*}

In the general form, the conjecture was proven by Lu \cite{Lu} and by Ge and
Tang \cite{GeTang} independently. DDVV inequality is also known as the
generalized Wintgen inequality now. Since then, the Wintgen inequality has
been studied by many authors for submanifolds in different ambient spaces.
See, for example, \cite%
{Aydin,Aytimur,chbl,dillen,Mihai-14,Mihai-16,Naghi,Roth,Su-17}. For the
collections of the studies about Wintgen inequality, we refer to \cite{Chen-Wint}.

Motivated by the studies of the above authors, in the present paper, we establish
new generalized Wintgen inequalities for submanifolds in conformally flat manifolds. 
As applications of our main inequality, we derive several new inequalities for 
submanifolds in Riemannian manifolds of quasi-constant curvature, generalized 
Robertson–Walker type warped products, conformally changed real space forms, 
and products of real space forms with opposite curvatures. 
We also discuss the equality cases and obtain new obstruction 
results concerning the existence of minimal submanifolds.

The paper is organized as follows: Section \ref{sect-pre} contains some
preliminaries. In Section \ref{sect-3}, we prove a generalized Wintgen
inequality for submanifolds in conformally flat manifolds. The main result
is Theorem \ref{Thm-1} that generalizes the inequality (\ref{wint}) which
was obtained in \cite{GeTang} and \cite{Lu}. Since a manifold of
quasi-constant curvature is conformally flat, as applications, in Section %
\ref{sect-4}, we provide generalized Wintgen inequalities for submanifolds
in a Riemannian manifold of quasi-constant curvature. 
Moreover, we give certain obstructions to the existence
of minimal submanifolds of conformally flat manifolds.

\section{\textbf{Preliminaries\label{sect-pre}}}

The Weyl conformal curvature tensor $\widetilde{C}$ of an $m$-dimensional
Riemannian manifold $(\widetilde{M},\widetilde{g})$ is defined by
\begin{align}
\widetilde{C}(X_{1},X_{2},X_{3},X_{4})&=\widetilde{R}%
(X_{1},X_{2},X_{3},X_{4})  \notag \\
&\hspace{-20pt}-\frac{1}{m-2}[ \widetilde{\operatorname{Ric}}(X_{2},X_{3})\widetilde{g}%
(X_{1},X_{4})-\widetilde{\operatorname{Ric}}(X_{1},X_{3})\widetilde{g}(X_{2},X_{4})  \notag
\\
&\hspace{-20pt}+\widetilde{\operatorname{Ric}}(X_{1},X_{4})\widetilde{g}(X_{2},X_{3})-%
\widetilde{\operatorname{Ric}}(X_{2},X_{4})\widetilde{g}(X_{1},X_{3})]  \notag \\
&\hspace{-20pt}+\frac{2\widetilde{\tau }}{(m-1)(m-2)}\left[ \widetilde{g}%
(X_{2},X_{3})\widetilde{g}(X_{1},X_{4})-\widetilde{g}(X_{1},X_{3})\widetilde{%
g}(X_{2},X_{4})\right] ,  \label{Weyl}
\end{align}
for any vector fields $X_{1},X_{2},X_{3},X_{4}$ tangent to $\widetilde{M}$,
where $\widetilde{R},$ $\widetilde{\operatorname{Ric}}$ and $\widetilde{\tau }$ denote the
Riemannian curvature tensor, the Ricci curvature tensor and the scalar
curvature of $(\widetilde{M},\widetilde{g})$, respectively. It is well-known
that for $m\geq 4$, the manifold is conformally flat if and only if $%
\widetilde{C}=0$.

Let $M^{n},$ $n\geq 3,$ be an $n$-dimensional submanifold of an $m$%
-dimensional conformally flat manifold $(\widetilde{M}^{m},%
\widetilde{g})$, $m\geq 4$. Using (\ref{Weyl}), the Gauss equation for the submanifold $%
M^{n}$ is
\begin{align}
R(X_{1},X_{2},X_{3},X_{4})&= \frac{1}{m-2}[\widetilde{\operatorname{Ric}}%
(X_{2},X_{3})g(X_{1},X_{4})-\widetilde{\operatorname{Ric}}(X_{1},X_{3})g(X_{2},X_{4})
\notag \\
&\hspace{-24pt}+\widetilde{\operatorname{Ric}}(X_{1},X_{4})g(X_{2},X_{3})-\widetilde{\operatorname{Ric}}%
(X_{2},X_{4})g(X_{1},X_{3})]  \notag \\
&\hspace{-24pt}-\frac{2\widetilde{\tau}}{(m-1)(m-2)}\left[%
g(X_{2},X_{3})g(X_1,X_4)-g(X_{1},X_{3})g(X_{2},X_{4})\right]  \notag \\
&\hspace{-24pt}+\widetilde{g}\left( h(X_{1},X_{4}),h(X_{2},X_{3})\right) -%
\widetilde{g}\left( h(X_{1},X_{3}),h(X_{2},X_{4})\right),
\label{eq-Gauss conform}
\end{align}
for any vector fields $X_{1},X_{2},X_{3},X_{4}$ tangent to $M^{n}$, where $R$
denotes the Riemannian curvature tensor of $(M^n,g)$, $g$ is the induced metric, and $h$ is the second
fundamental form.

The normalized scalar curvature $\rho $ \cite{DDDV} is given by
\begin{equation*}
\rho =\frac{2\tau }{n(n-1)}=\frac{2}{n(n-1)}\underset{1\leq i<j\leq n}{\sum }%
\sigma(e_{i}\wedge e_{j}),
\end{equation*}%
where $\{e_{i}\}_{1\leq i\leq n}$ is a local orthonormal frame on the
tangent bundle of the submanifold, $\sigma$ is the sectional curvature of a $%
2$-plane field, and the normalized normal scalar curvature $\rho ^{\perp }$
\cite{DDDV} is given by%
\begin{equation*}
\rho ^{\perp }=\frac{2\tau ^{\perp }}{n(n-1)}=\frac{2}{n(n-1)}\sqrt{%
\sum_{1\leq i<j\leq n}\sum_{1\leq r <s\leq m-n}\left( R^{\perp }\left(
e_{i},e_{j},\upsilon _{r},\upsilon _{s}\right) \right) ^{2}},
\end{equation*}%
where $\{\upsilon_{r}\}_{1\leq r\leq m-n}$ is a local orthonormal frame on
the normal bundle of the submanifold and $R^{\perp }$ is the curvature
tensor of the normal connection $\nabla ^{\perp }.$

\section{\textbf{Generalized Wintgen Inequality} \label{sect-3}}

In this section, following \cite{YK}, we consider the normal scalar
curvature $K_{N}$ of $M^{n}$
\begin{equation*}
K_{N}=-\frac{1}{4}\sum_{1\leq r,s\leq m-n}\operatorname{trace}\left[ A_{r},A_{s}\right]
^{2},
\end{equation*}%
where $A_{r}$, $1\leq r\leq m-n$, are the shape operators of $M^{n}$ in $(%
\widetilde{M}^{m},\widetilde{g})$, and the normalized normal scalar
curvature given by
\begin{equation*}
\rho _{N}=\frac{2}{n(n-1)}\sqrt{K_{N}}.
\end{equation*}
It is clear that
\begin{equation*}
K_{N}=-\frac{1}{2}\sum_{1\leq r<s\leq m-n}\operatorname{trace}\left[ A_{r},A_{s}\right]
^{2}=\sum_{1\leq r<s\leq m-n}\sum_{1\leq i<j\leq n}\left(g\left( \left[
A_{r},A_{s}\right] e_{i},e_{j}\right) \right)^{2}.
\end{equation*}%
In terms of the components of the second fundamental form, we have
\begin{equation}
K_{N}=\sum_{1\leq r<s\leq m-n}\sum_{1\leq i<j\leq n}\left(
\sum_{k=1}^{n}\left( h_{jk}^{r}h_{ik}^{s}-h_{ik}^{r}h_{jk}^{s}\right)
\right) ^{2}.  \label{2.1}
\end{equation}

\begin{lemma}
\label{Lem-1}Let $M^{n},n\geq 3,$ be an $n$-dimensional submanifold of an $m$%
-dimensional conformally flat manifold $(\widetilde{M}^{m},%
\widetilde{g})$, $m\geq 4$. Then
\begin{equation*}
\rho +\rho _{N}\leq \left\Vert H\right\Vert ^{2}+\frac{2}{n\left( m-2\right)
}\overset{n}{\underset{j=1}{\sum }}\widetilde{\operatorname{Ric}}(e_{j},e_{j})-\frac{2%
\widetilde{\tau }}{(m-1)(m-2)}.
\end{equation*}%
The equality case holds identically if and only if, with respect to suitable
orthonormal frames $\{e_{j}\}_{1\leq j\leq n}$ and $\{\upsilon _{r}\}_{1\leq
r\leq m-n}$, the shape operators $\{A_{r}\}_{1\leq r\leq m-n}$ of $M^{n}$ in
$(\widetilde{M}^{m},\widetilde{g})$ take the forms 
\pagebreak
\begin{equation}
A_{_{1}}=\left[
\begin{array}{ccccc}
\alpha _{1} & \beta & 0 & \cdots & 0 \\
\beta & \alpha _{1} & 0 & \cdots & 0 \\
0 & 0 & \alpha _{1} & \cdots & 0 \\
\vdots & \vdots & \vdots & \ddots & \vdots \\
0 & 0 & 0 & \cdots & \alpha _{1}%
\end{array}%
\right] ,\ \ A_{_{2}}=\left[
\begin{array}{ccccc}
\alpha _{2}+\beta & 0 & 0 & \cdots & 0 \\
0 & \alpha _{2}-\beta & 0 & \cdots & 0 \\
0 & 0 & \alpha _{2} & \cdots & 0 \\
\vdots & \vdots & \vdots & \ddots & \vdots \\
0 & 0 & 0 & 0 & \alpha _{2}%
\end{array}%
\right] ,  \label{shape}
\end{equation}%
\begin{equation*}
A_{_{3}}=\alpha _{3}I_{n},
\end{equation*}%
where $\alpha _{1},\alpha _{2},\alpha _{3},\beta $ are real functions on $%
M^{n}$, $I_{n}$ is the identity matrix and
\begin{equation*}
A_{_{4}}=\cdots =A_{_{m-n}}=0,
\end{equation*}
where $\widetilde{\operatorname{Ric}}$ and $\widetilde{\tau}$ denote the
Ricci tensor and the scalar curvature of $(\widetilde{M}^m,\widetilde{g})$, respectively.
\end{lemma}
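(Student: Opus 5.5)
Contract the Gauss equation (\ref{eq-Gauss conform}) to express $\rho$, then invoke the algebraic DDVV inequality of Lu and Ge--Tang, which involves only the shape operators. Take $X_1=X_4=e_i$, $X_2=X_3=e_j$ in (\ref{eq-Gauss conform}) and sum over $1\le i<j\le n$. The Ricci terms contribute
\[
\frac{1}{m-2}\sum_{i<j}\bigl[\widetilde{\operatorname{Ric}}(e_i,e_i)+\widetilde{\operatorname{Ric}}(e_j,e_j)\bigr]=\frac{n-1}{m-2}\sum_{j=1}^n\widetilde{\operatorname{Ric}}(e_j,e_j).
\]
The scalar curvature term contributes $-\frac{2\widetilde\tau}{(m-1)(m-2)}\cdot\frac{n(n-1)}{2}$. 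The second fundamental form part is
\[
\sum_{r}\sum_{i<j}\bigl(h^r_{ii}h^r_{jj}-(h^r_{ij})^2\bigr).
\]
Multiplying by $\frac{2}{n(n-1)}$ gives
\[
\rho=\frac{2}{n(m-2)}\sum_j\widetilde{\operatorname{Ric}}(e_j,e_j)-\frac{2\widetilde\tau}{(m-1)(m-2)}+\frac{2}{n(n-1)}\sum_r\sum_{i<j}\bigl(h^r_{ii}h^r_{jj}-(h^r_{ij})^2\bigr).
\]
This reduces the lemma to the purely algebraic inequality
\[
\frac{2}{n(n-1)}\sum_r\sum_{i<j}\bigl(h^r_{ii}h^r_{jj}-(h^r_{ij})^2\bigr)+\rho_N\le\|H\|^2 .
\]

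This is exactly the DDVV inequality in Euclidean space, i.e. the case $c=0$ of (\ref{wint}). In that setting the normal curvature is $R^\perp(e_i,e_j,\upsilon_r,\upsilon_s)=g([A_r,A_s]e_i,e_j)$ up to sign, so $\rho^\perp=\rho_N$ by (\ref{2.1}). The Lu / Ge--Tang result is really the matrix inequality
\[
\sum_{r,s}\|[B_r,B_s]\|^2\le\Bigl(\sum_r\|B_r\|^2\Bigr)^2
\]
for symmetric $B_r$. I would apply it to the traceless parts $B_r=A_r-\frac{\operatorname{trace}A_r}{n}I$, which have the same commutators as the $A_r$. Using
\[
\sum_r\|B_r\|^2=\|h\|^2-n^2\|H\|^2 \quad\text{and}\quad 2\sum_r\sum_{i<j}\bigl(h^r_{ii}h^r_{jj}-(h^r_{ij})^2\bigr)=n^2\|H\|^2-\|h\|^2,
\]
the matrix inequality is equivalent to the displayed algebraic inequality after taking square roots and normalizing. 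Since the algebraic inequality is pointwise and the conformal-flatness terms are independent of $h$, the only real input is this theorem, which I take as known. The step to handle carefully is this bookkeeping of constants, so that $\sqrt{K_N}$ matches $\frac12\sqrt{\sum_{r,s}\|[B_r,B_s]\|^2}$ and the factor $\frac{2}{n(n-1)}$ comes out correctly.

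For the equality case, note that equality in the lemma holds if and only if equality holds in the DDVV matrix inequality for the $B_r$. Ge--Tang characterize that equality: up to orthogonal changes of the tangent and normal frames, $B_r=0$ for $r\ge3$, and $B_1$, $B_2$ are $\beta$ times the standard pair supported on the span of $e_1,e_2$, namely the off-diagonal symmetric matrix and $\operatorname{diag}(1,-1)$. One subtlety is that the trace parts $\alpha_rI$ must afterwards be collected into a single normal direction. I would rotate the normal frame within $\operatorname{span}\{\upsilon_3,\dots,\upsilon_{m-n}\}$ so that the mean curvature component there lies along $\upsilon_3$. Because $B_r=0$ for $r\ge3$, this rotation preserves the normal form of $B_1$, $B_2$ and makes $A_4=\dots=A_{m-n}=0$, which yields (\ref{shape}). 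Conversely, direct computation on (\ref{shape}) shows the bound is attained, which proves the equivalence.
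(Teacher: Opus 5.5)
Your route is the paper's: contract the Gauss equation \eqref{eq-Gauss conform} and then feed in the algebraic DDVV inequality of Lu and Ge--Tang. In fact the paper's \eqref{2.3} is exactly your matrix inequality for the traceless parts. Indeed $\sum_{i<j}(h^r_{ii}-h^r_{jj})^2+2n\sum_{i<j}(h^r_{ij})^2=n\|B_r\|^2$, so \eqref{2.3} reads $\sum_r\|B_r\|^2\ge 2\sqrt{K_N}$, and \eqref{2.2} does the trace bookkeeping.

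There is, however, an error in precisely the step you flagged as delicate. Since $\|B_r\|^2=\|A_r\|^2-\frac1n(\operatorname{trace}A_r)^2$ and $n^2\|H\|^2=\sum_r(\operatorname{trace}A_r)^2$, the correct identity is $\sum_r\|B_r\|^2=\|h\|^2-n\|H\|^2$, not $\|h\|^2-n^2\|H\|^2$. Write $S=\sum_r\sum_{i<j}\bigl(h^r_{ii}h^r_{jj}-(h^r_{ij})^2\bigr)$. Then your version equals $-2S$, which is negative for a non-minimal totally umbilical submanifold, so it cannot be a sum of squares. With your two identities as written, the matrix inequality would give $\rho_N+\frac{2}{n(n-1)}S\le 0$, with $\|H\|^2$ missing, and that is not the lemma.

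With the corrected identity you get $\sum_r\|B_r\|^2=n(n-1)\|H\|^2-2S$. Also $\sum_{r,s}\|[B_r,B_s]\|^2=4K_N$, as you anticipated. The DDVV inequality then gives $2\sqrt{K_N}\le n(n-1)\|H\|^2-2S$. Dividing by $n(n-1)$ yields exactly $\frac{2}{n(n-1)}S+\rho_N\le\|H\|^2$, and your reduction via the contracted Gauss equation finishes the proof.

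Your equality discussion is correct and more complete than the paper's, which simply asserts the characterization. Equality in the lemma is equivalent to equality in the matrix inequality, since every other step is an identity. Ge--Tang's rigidity result then gives the normal form of $B_1,B_2$ with $B_r=0$ for $r\ge 3$. Your rotation inside $\operatorname{span}\{\upsilon_3,\dots,\upsilon_{m-n}\}$ is exactly the step the paper leaves implicit: it produces $A_3=\alpha_3 I_n$ and $A_4=\dots=A_{m-n}=0$. For the converse, the forms \eqref{shape} give $\sqrt{K_N}=2\beta^2=\frac12\sum_r\|B_r\|^2$, so the bound is attained.
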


\begin{proof}
From \cite{Mihai-14}, we know that
\begin{align}
n^{2}\left\Vert H\right\Vert ^{2}& =\sum_{r=1}^{m-n}\left(
\sum_{i=1}^{n}h_{ii}^{r}\right) ^{2}  \label{2.2} \\
& =\frac{1}{n-1}\sum_{r=1}^{m-n}\sum_{1\leq i<j\leq n}\left(
h_{ii}^{r}-h_{jj}^{r}\right) ^{2}+\frac{2n}{n-1}\sum_{r=1}^{m-n}\sum_{1\leq
i<j\leq n}h_{ii}^{r}h_{jj}^{r}.  \notag
\end{align}%
On the other hand, from \cite{Lu}, we also know that
\begin{equation*}
\sum_{r=1}^{m-n}\sum_{1\leq i<j\leq n}\left( h_{ii}^{r}-h_{jj}^{r}\right)
^{2}+2n\sum_{r=1}^{m-n}\sum_{1\leq i<j\leq n}\left( h_{ij}^{r}\right) ^{2}
\end{equation*}%
\begin{equation}
\geq 2n\sqrt{\sum_{1\leq r<s\leq m-n}\sum_{1\leq i<j\leq n}\left(
\sum_{k=1}^{n}\left( h_{jk}^{r}h_{ik}^{s}-h_{ik}^{r}h_{jk}^{s}\right)
\right) ^{2}}.  \label{2.3}
\end{equation}%
Combining (\ref{2.2}), (\ref{2.3}) and (\ref{2.1}), we have
\begin{equation}
n^{2}\left\Vert H\right\Vert ^{2}-n^{2}\rho _{N}\geq \frac{2n}{n-1}%
\sum_{r=1}^{m-n}\sum_{1\leq i<j\leq n}\left( h_{ii}^{r}h_{jj}^{r}-\left(
h_{ij}^{r}\right) ^{2}\right) .  \label{2.4}
\end{equation}%
Furthermore, from (\ref{eq-Gauss conform}), we also have
\begin{equation*}
\tau =\frac{n-1}{m-2}\overset{n}{\underset{j=1}{\sum }}\widetilde{\operatorname{Ric}}%
(e_{j},e_{j})-\frac{n(n-1)\widetilde{\tau }}{(m-1)(m-2)}+\sum_{r=1}^{m-n}%
\sum_{1\leq i<j\leq n}\left( h_{ii}^{r}h_{jj}^{r}-\left( h_{ij}^{r}\right)
^{2}\right) .
\end{equation*}%
Substituting it into (\ref{2.4}), we obtain%
\begin{equation*}
n^{2}\left\Vert H\right\Vert ^{2}-n^{2}\rho _{N}\geq \frac{2n}{
n-1}\left( \tau -\frac{n-1}{m-2}\overset{n}{\underset{j=1}{\sum }}%
\widetilde{\operatorname{Ric}}(e_{j},e_{j})+\frac{n(n-1)\widetilde{\tau }}{(m-1)(m-2)}%
\right) ,
\end{equation*}%
which implies%
\begin{equation*}
\left\Vert H\right\Vert ^{2}-\rho _{N}\geq \rho -\frac{2}{n\left( m-2\right)
}\overset{n}{\underset{j=1}{\sum }}\widetilde{\operatorname{Ric}}(e_{j},e_{j})+\frac{2%
\widetilde{\tau }}{(m-1)(m-2)}.
\end{equation*}

The equality case of the inequality holds if and only if the shape operators
are in the above forms with respect to suitable frames.
\end{proof}

It is known that a submanifold is minimal (cf. \cite{Ch73}) if the mean
curvature $H=0$.

From the previous lemma, we deduce:

\begin{corollary}
Let $M^{n},n\geq 3,$ be an $n$-dimensional minimal submanifold of an $m$%
-dimensional conformally flat manifold $(\widetilde{M}^{m},%
\widetilde{g})$, $m\geq 4$. Then
\begin{equation*}
\rho+\rho _{N} \leq \frac{2}{n\left( m-2\right) } \overset{n}{\underset{j=1}{%
\sum }}\widetilde{\operatorname{Ric}}(e_{j},e_{j}) -\frac{2\widetilde{\tau }}{(m-1)(m-2)}.
\end{equation*}
\end{corollary}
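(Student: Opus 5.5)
This follows directly from Lemma \ref{Lem-1} by specializing to the case $H=0$. The plan is to take the inequality of Lemma \ref{Lem-1}, which holds pointwise for any $n$-dimensional submanifold $M^n$, $n\geq 3$, of an $m$-dimensional conformally flat manifold $(\widetilde{M}^m,\widetilde{g})$ with $m\geq 4$. We substitute the minimality condition $H=0$, so that $\left\Vert H\right\Vert^{2}=0$ at every point. The right-hand side then reduces to
\begin{equation*}
\frac{2}{n(m-2)}\sum_{j=1}^{n}\widetilde{\operatorname{Ric}}(e_j,e_j)-\frac{2\widetilde{\tau}}{(m-1)(m-2)},
\end{equation*}
which is exactly the claimed bound for $\rho+\rho_N$.

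The only point worth checking is that nothing in Lemma \ref{Lem-1} depended on $H$ being nonzero. This is clear from its proof. Identity (\ref{2.2}) is purely algebraic in the $h^r_{ij}$, and the DDVV-type estimate (\ref{2.3}) from \cite{Lu} is also purely algebraic. The trace of the Gauss equation (\ref{eq-Gauss conform}) giving $\tau$ likewise involves no assumption on $H$. The term $\sum_j\widetilde{\operatorname{Ric}}(e_j,e_j)$ is the trace of $\widetilde{\operatorname{Ric}}$ restricted to $TM$, so it does not depend on the choice of orthonormal frame. There is therefore no real obstacle.

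For completeness, the equality case can be noted as well. Equality holds exactly when the shape operators have the form (\ref{shape}) with the additional constraint that each $\operatorname{trace}A_r=0$. This forces $\alpha_1=\alpha_2=\alpha_3=0$, leaving only $\beta$ free.
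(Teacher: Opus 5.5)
Your proposal is correct and matches the paper, which deduces this corollary directly from Lemma \ref{Lem-1} by setting $\Vert H\Vert^{2}=0$. Your extra remark on the equality case is also consistent with the lemma: vanishing traces of the forms (\ref{shape}) force $\alpha_1=\alpha_2=\alpha_3=0$.
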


Now we can prove the following result:
\begin{theorem}\label{teor1}
\label{Thm-1}Let $M^{n},n\geq 3,$ be an $n$-dimensional submanifold of an $m$%
-dimensional conformally flat manifold $(\widetilde{M}^{m},%
\widetilde{g})$, $m\geq 4$. Then
\begin{equation*}
\rho +\rho ^{\perp }\leq \left\Vert H\right\Vert ^{2}+\frac{2}{n\left(
m-2\right) }\overset{n}{\underset{j=1}{\sum }}\widetilde{\operatorname{Ric}}(e_{j},e_{j})-%
\frac{2\widetilde{\tau }}{(m-1)(m-2)}.
\end{equation*}%
The equality case holds identically if and only if, with respect to suitable
orthonormal frames $\{e_{j}\}_{1\leq j\leq n}$ and $\{\upsilon _{r}\}_{1\leq
r\leq m-n}$, the shape operators $\{A_{r}\}_{1\leq r\leq m-n}$ of $M^{n}$ in
$(\widetilde{M}^{m},\widetilde{g})$ take the forms $(\ref{shape})$, where $\widetilde{\operatorname{Ric}}$ and $\widetilde{\tau}$ denote the
Ricci tensor and the scalar curvature of $(\widetilde{M}^m,\widetilde{g})$, respectively.
\end{theorem}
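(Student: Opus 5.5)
Theorem \ref{Thm-1} will follow from Lemma \ref{Lem-1} once we show that $\rho^{\perp}=\rho_N$ for submanifolds of a conformally flat manifold. So the plan is to compute the normal curvature tensor via the Ricci equation and check that the ambient curvature terms contribute nothing to $R^{\perp}(e_i,e_j,\upsilon_r,\upsilon_s)$.

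\textbf{Step 1: the Ricci equation.} For $X,Y$ tangent to $M^n$ and $\xi,\eta$ normal, the Ricci equation gives
\begin{equation*}
\widetilde{R}(X,Y,\xi,\eta)=R^{\perp}(X,Y,\xi,\eta)-g([A_\xi,A_\eta]X,Y),
\end{equation*}
with sign conventions matched to (\ref{eq-Gauss conform}). Since $\widetilde{M}^m$ is conformally flat and $m\geq 4$, we have $\widetilde{C}=0$. So (\ref{Weyl}) expresses $\widetilde{R}(e_i,e_j,\upsilon_r,\upsilon_s)$ through $\widetilde{\operatorname{Ric}}$ and $\widetilde{g}$ alone.

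\textbf{Step 2: the ambient term vanishes.} Every term on the right of (\ref{Weyl}) contains a factor $\widetilde{g}(X_a,X_b)$ pairing one of $e_i,e_j$ with one of $\upsilon_r,\upsilon_s$. Each such factor is $0$ by orthogonality of $TM$ and $T^{\perp}M$. Hence $\widetilde{R}(e_i,e_j,\upsilon_r,\upsilon_s)=0$, and therefore
\begin{equation*}
R^{\perp}(e_i,e_j,\upsilon_r,\upsilon_s)=\pm g([A_r,A_s]e_i,e_j)=\pm\sum_{k=1}^{n}\left(h^r_{jk}h^s_{ik}-h^r_{ik}h^s_{jk}\right).
\end{equation*}
The sign is irrelevant because the quantity is squared.

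\textbf{Step 3: conclude.} Substituting into the definition of $\rho^{\perp}$ and comparing with (\ref{2.1}) gives $\tau^{\perp}=\sqrt{K_N}$, hence $\rho^{\perp}=\rho_N$. The inequality of Lemma \ref{Lem-1} then becomes the asserted one. The equality characterization transfers verbatim, since both sides coincide term by term; in particular the shape operators must take the forms (\ref{shape}).

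The only point needing care is the sign and index convention in the Ricci equation, so that it matches the Gauss equation (\ref{eq-Gauss conform}). This is harmless here because only squares enter. The substantive input is the vanishing of the Weyl tensor, which makes the mixed ambient curvature components zero.
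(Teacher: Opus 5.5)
Your proposal is correct and follows the paper's proof essentially step for step. Conformal flatness kills the mixed components $\widetilde{R}(e_i,e_j,\upsilon_r,\upsilon_s)$, the Ricci equation then gives $\tau^{\perp}=\sqrt{K_N}$ and hence $\rho^{\perp}=\rho_N$, and Lemma~\ref{Lem-1} (with its equality case) yields the theorem. Your only addition is explaining why these components vanish, namely that every term of the Weyl decomposition contains a factor $\widetilde{g}(\text{tangent},\text{normal})=0$, which the paper simply asserts.
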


\begin{proof}
Since \(\widetilde M^m\) is conformally flat, we have
\[
\widetilde R(e_i,e_j,e_{n+r},e_{n+s})=0
\]
for tangent vectors \(e_i,e_j\) and normal vectors \(e_{n+r},e_{n+s}\).
Thus, the Ricci equation gives
\begin{equation*}
g\left( R^{\perp }(e_{i},e_{j})e_{n+r},e_{n+s}\right) =g\left( \left[
A_{r},A_{s}\right] e_{i},e_{j}\right) ,
\end{equation*}%
for all $1\leq i,j\leq n$ and $1\leq r,s\leq m-n.$ Then we find
\pagebreak
\begin{align*}
(\tau ^{\perp })^{2}& =\overset{}{\underset{1\leq r<s\leq m-n}{\sum }}%
\overset{}{\underset{1\leq i<j\leq n}{\sum }}\left( g\left( R^{\perp
}(e_{i},e_{j})e_{n+r},e_{n+s}\right) \right) ^{2} \\
& =\overset{}{\underset{1\leq r<s\leq m-n}{\sum }}\overset{}{\underset{1\leq
i<j\leq n}{\sum }}\left( g\left( \left[ A_{r},A_{s}\right]
e_{i},e_{j}\right) \right) ^{2}=K_{N}=\frac{n^{2}(n-1)^{2}}{4}\rho _{N}^{2}
\end{align*}%
and
\begin{equation*}
\left( \rho ^{\perp }\right) ^{2}=\rho _{N}^{2}.
\end{equation*}%
Since both  $\rho ^{\perp}$ and $\rho _{N}$ are nonnegative, this gives
\begin{equation*}
\rho ^{\perp }=\rho _{N}.
\end{equation*}%
Then, using Lemma \ref{Lem-1}, we have the result.
\end{proof}

A direct consequence of the above theorem is the following:

\begin{corollary}
\label{Cor-1}Under the hypotheses of Theorem \ref{teor1}, if $M^{n}$ is a minimal submanifold, then
\begin{equation}
\rho +\rho ^{\perp }\leq \frac{2}{n\left( m-2\right) }\overset{n}{\underset{%
j=1}{\sum }}\widetilde{\operatorname{Ric}}(e_{j},e_{j})-\frac{2\widetilde{\tau }}{(m-1)(m-2)%
}. \label{cor34}
\end{equation}
\end{corollary}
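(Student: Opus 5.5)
This is an immediate specialization of Theorem \ref{teor1}, so the plan is simply to substitute the minimality hypothesis into that inequality. A minimal submanifold has $H=0$, hence $\left\Vert H\right\Vert ^{2}=0$. Theorem \ref{teor1} holds for any $n$-dimensional submanifold, $n\geq 3$, of a conformally flat $(\widetilde{M}^{m},\widetilde{g})$, $m\geq 4$, and minimality is an extra hypothesis. Therefore the inequality
\begin{equation*}
\rho +\rho ^{\perp }\leq \left\Vert H\right\Vert ^{2}+\frac{2}{n\left( m-2\right) }\sum_{j=1}^{n}\widetilde{\operatorname{Ric}}(e_{j},e_{j})-\frac{2\widetilde{\tau }}{(m-1)(m-2)}
\end{equation*}
is available, and setting $\left\Vert H\right\Vert ^{2}=0$ yields (\ref{cor34}) at once.

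For completeness I would also recall where the ingredients come from. First, the identity $\rho^{\perp}=\rho_N$ from the proof of Theorem \ref{teor1}. It follows from the Ricci equation: conformal flatness makes the mixed curvature components $\widetilde R(e_i,e_j,e_{n+r},e_{n+s})$ vanish. Second, Lemma \ref{Lem-1}, or equivalently the Corollary following it, which already treats the minimal case with $\rho_N$ in place of $\rho^\perp$.

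There is essentially no obstacle. The only point to note is that the right-hand side does not depend on the choice of orthonormal tangent frame. Indeed, $\sum_{j}\widetilde{\operatorname{Ric}}(e_j,e_j)$ is the trace of $\widetilde{\operatorname{Ric}}$ restricted to $TM$, so the statement is well defined pointwise.

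If one also wants an equality statement, the equality characterization (\ref{shape}) from Theorem \ref{teor1} still applies. Adding the condition $\operatorname{trace}A_r=0$ for every $r$ forces $\alpha_1=\alpha_2=\alpha_3=0$. Thus equality in the minimal case would mean $A_1$ has only the off-diagonal entry $\beta$, $A_2=\operatorname{diag}(\beta,-\beta,0,\dots,0)$, and all other shape operators vanish. This refinement is not required by the stated corollary.
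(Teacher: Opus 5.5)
Your proposal is correct and matches the paper, which presents this corollary as a direct consequence of Theorem \ref{teor1}, obtained by setting $\left\Vert H\right\Vert^{2}=0$. Your extra equality remark is also right: the traces of the forms (\ref{shape}) are $n\alpha_1$, $n\alpha_2$, $n\alpha_3$, so minimality forces $\alpha_1=\alpha_2=\alpha_3=0$. The corollary does not require it.
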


As an immediate consequence of Corollary \ref{Cor-1}, we obtain the
following obstructions to the existence of minimal submanifolds:

\begin{corollary}
Under the hypotheses of Theorem \ref{teor1}, if at a point $x\in M^n$,
\begin{equation*}
\rho +\rho ^{\perp }>\frac{2}{n(m-2)}\sum_{j=1}^{n}\widetilde{\operatorname{Ric}}%
(e_{j},e_{j})-\frac{2\widetilde{\tau }}{(m-1)(m-2)},
\end{equation*}%
then $M^{n}$ is not minimal at $x$.
\end{corollary}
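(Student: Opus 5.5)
The claim is the obstruction corollary. I would prove it as the contrapositive of Corollary \ref{Cor-1}, applied pointwise.

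Suppose, for contradiction, that $M^n$ is minimal at the point $x$, so $H(x)=0$. The plan is to check that the inequality of Theorem \ref{teor1} is a pointwise statement. Its proof only uses algebraic identities at a single point:
\begin{itemize}
\item the decomposition (\ref{2.2}) of $n^2\|H\|^2$;
\item the DDVV-type algebraic inequality (\ref{2.3}) for symmetric matrices;
\item the traced Gauss equation from (\ref{eq-Gauss conform});
\item the Ricci equation with vanishing normal curvature terms of $\widetilde R$, which gives $\rho^\perp=\rho_N$.
\end{itemize}
None of these requires $H$ to vanish on an open set. Hence at $x$ we have
\begin{equation*}
\rho(x)+\rho^\perp(x)\leq \|H(x)\|^2+\frac{2}{n(m-2)}\sum_{j=1}^n\widetilde{\operatorname{Ric}}(e_j,e_j)-\frac{2\widetilde\tau}{(m-1)(m-2)}.
\end{equation*}

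Substituting $\|H(x)\|^2=0$ turns this into exactly (\ref{cor34}) at $x$. That is the reverse of the strict inequality assumed in the statement, which is a contradiction. Therefore $H(x)\neq 0$, i.e. $M^n$ is not minimal at $x$.

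One should also note that the right-hand side does not depend on the choice of frame. The quantity $\sum_j\widetilde{\operatorname{Ric}}(e_j,e_j)$ is the trace of $\widetilde{\operatorname{Ric}}$ restricted to $T_xM$, so it is independent of the orthonormal frame chosen. This makes the hypothesis well defined.

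There is no real obstacle here. The only point needing care is that Corollary \ref{Cor-1} is stated for minimal submanifolds, whereas we need it at a single point. This is handled by applying Theorem \ref{teor1} at $x$ directly, as above.
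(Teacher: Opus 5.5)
Your proposal is correct and follows the paper's route: the paper presents this corollary as an immediate consequence of Corollary \ref{Cor-1}, which is the contrapositive of the inequality of Theorem \ref{teor1} once $\|H\|^2=0$ is substituted. Your extra remarks are accurate and make the argument slightly more careful than the paper's one-line justification: the inequality holds pointwise, so it applies when $H$ vanishes only at $x$, and $\sum_j\widetilde{\operatorname{Ric}}(e_j,e_j)$ does not depend on the choice of frame.
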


\begin{corollary}
Under the hypotheses of Theorem \ref{teor1}, we suppose that along $M^n$,
\begin{equation*}
\frac{2}{n(m-2)}\sum_{j=1}^{n}\widetilde{\operatorname{Ric}}(e_{j},e_{j})-\frac{2\widetilde{%
\tau }}{(m-1)(m-2)}\leq 0.
\end{equation*}%
If
\begin{equation*}
\rho +\rho ^{\perp }>0
\end{equation*}%
at some point of $M^n$, then $M^{n}$ is not minimal at that point. In
particular, there exists no minimal immersion satisfying the above ambient
curvature condition and $\rho +\rho ^{\perp }>0$ everywhere.
\end{corollary}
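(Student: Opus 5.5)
The statement is a direct consequence of Corollary \ref{Cor-1}, so I would argue by contraposition at a single point and never integrate or use global information. Fix $x\in M^n$ and suppose $M^n$ were minimal at $x$, that is, $H(x)=0$.

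The key observation is that the inequality of Theorem \ref{teor1} is pointwise. Its proof uses only the algebraic relations at $x$: the Gauss equation (\ref{eq-Gauss conform}), the Ricci equation together with $\widetilde C=0$, and Lu's algebraic inequality (\ref{2.3}). So it holds at $x$ with the term $\left\Vert H\right\Vert^{2}$ evaluated at $x$. Putting $\left\Vert H\right\Vert^{2}(x)=0$ gives inequality (\ref{cor34}) at $x$, for any orthonormal frame $\{e_j\}$ of $T_xM^n$.

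I would also record that $\sum_{j=1}^n\widetilde{\operatorname{Ric}}(e_j,e_j)$ is the trace of $\widetilde{\operatorname{Ric}}$ restricted to $T_xM^n$. It is therefore independent of the chosen frame, so both sides of the hypothesis are well-defined at $x$. Comparing (\ref{cor34}) at $x$ with the assumed strict reverse inequality at $x$ gives a contradiction. Hence $H(x)\neq 0$, which means $M^n$ is not minimal at $x$.

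There is no real obstacle here. The only point needing care is that the minimality assumption is used only at the point $x$. This is legitimate because every step of Lemma \ref{Lem-1} and Theorem \ref{teor1} is tensorial and evaluated pointwise, so Corollary \ref{Cor-1} holds with minimality assumed at $x$ alone.
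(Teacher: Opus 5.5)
Your argument is correct and is the paper's own route: the paper presents this corollary as an immediate consequence of Corollary \ref{Cor-1}. Your remark that Theorem \ref{teor1} holds pointwise, so that only $H(x)=0$ is needed, is what justifies the conclusion ``not minimal at that point''. One step is missing as written: you refer to an ``assumed strict reverse inequality'', but this statement does not assume one. You should obtain it at $x$ from the chain $\rho+\rho^{\perp}>0\geq \frac{2}{n(m-2)}\sum_{j=1}^{n}\widetilde{\operatorname{Ric}}(e_{j},e_{j})-\frac{2\widetilde{\tau}}{(m-1)(m-2)}$. The ``in particular'' clause then follows by applying this at any single point.
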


When the submanifold is compact, we have the following result:

\begin{theorem}
Let $M^{n}$, $n\geq 3$, be a compact $n$-dimensional submanifold of an $m$%
-dimensional conformally flat manifold $(\widetilde{M}^{m},%
\widetilde{g})$, $m\geq 4$. If
\begin{equation*}
\int_{M^{n}}\left( \rho +\rho ^{\perp }\right) \,dV>\int_{M^{n}}\left( \frac{2}{%
n(m-2)}\sum_{j=1}^{n}\widetilde{\operatorname{Ric}}(e_{j},e_{j})-\frac{2\widetilde{\tau }}{%
(m-1)(m-2)}\right) \,dV,
\end{equation*}%
then the given submanifold is not minimal.
\end{theorem}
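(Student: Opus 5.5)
We argue by contraposition, using Corollary \ref{Cor-1}. Suppose the compact submanifold $M^n$ were minimal, i.e.\ $H=0$ at every point of $M^n$.

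First, note that the function
\[
F=\frac{2}{n(m-2)}\sum_{j=1}^{n}\widetilde{\operatorname{Ric}}(e_{j},e_{j})-\frac{2\widetilde{\tau}}{(m-1)(m-2)}
\]
is a well-defined smooth function on $M^n$. The sum $\sum_j\widetilde{\operatorname{Ric}}(e_j,e_j)$ is the trace of $\widetilde{\operatorname{Ric}}$ restricted to $T_xM$, so it does not depend on the choice of orthonormal frame. Similarly, $\rho$ and $\rho^{\perp}$ are frame-independent. The function $\rho^{\perp}$ is continuous, being the square root of a nonnegative smooth function. All integrands are therefore continuous, and since $M^n$ is compact, both integrals in the hypothesis are finite.

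Next, apply Corollary \ref{Cor-1} pointwise at every $x\in M^n$. This is possible because minimality means $H\equiv 0$, so inequality (\ref{cor34}) gives $\rho+\rho^{\perp}\le F$ on all of $M^n$. Integrating this pointwise inequality against the Riemannian volume form $dV$ of the induced metric, and using monotonicity of the integral for continuous functions on a compact manifold, we obtain
\[
\int_{M^n}(\rho+\rho^{\perp})\,dV\le\int_{M^n}F\,dV .
\]
This contradicts the strict inequality assumed in the theorem, so $M^n$ cannot be minimal.

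There is no real obstacle here. The only points to check are the frame-independence of $F$ and the integrability of the integrands, and both are settled by compactness together with continuity.
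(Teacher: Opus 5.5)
Your proposal is correct and matches the paper's proof: assume minimality, apply Corollary \ref{Cor-1} at every point to get $\rho+\rho^{\perp}\le F$, integrate over the compact $M^n$, and contradict the strict hypothesis. Your remarks on the frame-independence of $\sum_j\widetilde{\operatorname{Ric}}(e_j,e_j)$ and the continuity of $\rho^{\perp}$ are details the paper leaves implicit.
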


\begin{proof}
Suppose that $M^{n}$ is minimal. Then, by Corollary \ref{Cor-1}, we have the inequality (\ref{cor34})
at every point of $M^n$. Integrating this inequality over the compact manifold
$M^n$, we get
\begin{equation*}
\int_{M^{n}}\left( \rho +\rho ^{\perp }\right) \,dV\leq \int_{M^{n}}\left( \frac{2}{%
n(m-2)}\sum_{j=1}^{n}\widetilde{\operatorname{Ric}}(e_{j},e_{j})-\frac{2\widetilde{\tau }}{%
(m-1)(m-2)}\right) \,dV,
\end{equation*}%
which contradicts the hypothesis. Therefore $M^{n}$ cannot be minimal.
\end{proof}

\section{\textbf{Some Applications} \label{sect-4}}

Let $(\widetilde{M},\widetilde{g})$ be a Riemannian manifold. If the
curvature tensor $\widetilde{R}$ of $\widetilde{M}$ satisfies
\begin{align}
\widetilde{R}(X_{1},X_{2},X_{3},X_{4})& =p\left[ \widetilde{g}(X_{2},X_{3})%
\widetilde{g}(X_{1},X_{4})-\widetilde{g}(X_{1},X_{3})\widetilde{g}%
(X_{2},X_{4})\right]  \notag \\
& \hspace{12pt}+q\left[ \widetilde{g}(X_{1},X_{4})\psi (X_{2})\psi (X_{3})-%
\widetilde{g}(X_{1},X_{3})\psi (X_{2})\psi (X_{4})\right.  \notag \\
& \hspace{12pt}\left. +\widetilde{g}(X_{2},X_{3})\psi (X_{1})\psi (X_{4})-%
\widetilde{g}(X_{2},X_{4})\psi (X_{1})\psi (X_{3})\right] ,  \label{quasi}
\end{align}%
for any vector fields $X_{1},X_{2},X_{3},X_{4}$ tangent to $\widetilde{M}$,
then it is called a \textit{manifold of quasi-constant curvature} \cite%
{Chen-Yano} with associated functions $p$ and $q$, where $p,q$ are real
smooth functions on $\widetilde{M}$, and $\psi $ is a $1$-form defined by
\begin{equation}
\psi (X_{1})=\widetilde{g}(X_{1},V),  \label{T(X)}
\end{equation}%
where $V$ is a unit vector field. It is clear that the manifold $\widetilde{M%
}$ is conformally flat if it is a manifold of quasi-constant curvature; $%
\widetilde{M}$ is a space of constant sectional curvature if $q=0$. We
mention that in \cite{Ozgur-2011} and \cite{Ozgur-25}, B.-Y. Chen
inequalities were obtained for submanifolds of a Riemannian manifold of
quasi-constant curvature and a conformally flat manifold, respectively. Furthermore, inequalities for the generalized normalized
$\delta$-Casorati curvatures for submanifolds in conformally flat manifolds were found in \cite{Ozgur-2026}.

Theorem \ref{Thm-1} gives us the following inequality which is a special case of the inequality given in \cite{Su-17}:

\begin{corollary}\label{Cor-22}
Let $M^{n},n\geq 3,$ be an $n$-dimensional submanifold of an $m$-dimensional
manifold of quasi-constant curvature $(\widetilde{M}^{m},\widetilde{g}),$ $%
m\geq 4.$ Then
\begin{equation}
\rho+\rho ^{\perp }\leq \left\Vert H\right\Vert ^{2}+p+\frac{2q}{n}%
\left\Vert V^{\top}\right\Vert ^{2}.  \label{quasi-cor-2}
\end{equation}
\end{corollary}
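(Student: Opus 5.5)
The plan is to apply Theorem \ref{Thm-1} directly. The only work is to evaluate its ambient curvature term
\[
\frac{2}{n(m-2)}\sum_{j=1}^{n}\widetilde{\operatorname{Ric}}(e_j,e_j)-\frac{2\widetilde{\tau}}{(m-1)(m-2)}
\]
when $\widetilde{R}$ has the form (\ref{quasi}). Manifolds of quasi-constant curvature are conformally flat, so Theorem \ref{Thm-1} applies.

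First I will compute the Ricci tensor and scalar curvature from (\ref{quasi}). Contracting over an orthonormal frame $\{E_a\}_{1\le a\le m}$ of $\widetilde M$ and using $\|V\|=1$, I expect
\[
\widetilde{\operatorname{Ric}}(X,Y)=\big((m-1)p+q\big)\widetilde g(X,Y)+(m-2)q\,\psi(X)\psi(Y),
\qquad
\widetilde\tau=m(m-1)p+2(m-1)q .
\]
Restricting to the tangent frame $\{e_j\}$ gives
\[
\sum_{j=1}^{n}\widetilde{\operatorname{Ric}}(e_j,e_j)=n\big((m-1)p+q\big)+(m-2)q\|V^{\top}\|^2,
\]
since $\sum_j\psi(e_j)^2=\|V^\top\|^2$.

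The main obstacle is a normalization issue in the scalar curvature term. With the contraction above, a constant curvature space ($q=0$) gives $2(m-1)p/(m-2)-2mp/(m-2)=-2p/(m-2)$ for the displayed expression, not $p$. The reason is that the term in Theorem \ref{Thm-1} really comes from the ambient sectional curvatures $\widetilde K(e_i\wedge e_j)$ of the tangent planes. When $\widetilde C=0$, these satisfy
\[
\widetilde K(e_i\wedge e_j)=\frac{\widetilde{\operatorname{Ric}}(e_i,e_i)+\widetilde{\operatorname{Ric}}(e_j,e_j)}{m-2}-\frac{\widetilde\tau}{(m-1)(m-2)},
\]
and summing over $i<j$ and multiplying by $2/(n(m-1))$ yields the theorem's term. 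I will therefore work with this intrinsic meaning, namely $\frac{2}{n(n-1)}\sum_{i<j}\widetilde K(e_i\wedge e_j)$. This fixes the sign and the factor in front of $\widetilde\tau$ consistently with the constant curvature case $\rho+\rho^\perp\le\|H\|^2+c$.

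To avoid any convention ambiguity, I will compute this quantity directly from (\ref{quasi}) rather than through $\widetilde{\operatorname{Ric}}$:
\[
\widetilde K(e_i\wedge e_j)=p+q\big(\psi(e_i)^2+\psi(e_j)^2\big).
\]
Summing over $1\le i<j\le n$ gives
\[
\sum_{i<j}\widetilde K(e_i\wedge e_j)=\frac{n(n-1)}{2}p+(n-1)q\|V^\top\|^2 .
\]
Normalizing by $\frac{2}{n(n-1)}$ gives $p+\frac{2q}{n}\|V^\top\|^2$. As a cross-check, substituting the Ricci and scalar expressions above into the Weyl-type formula with the $\widetilde\tau/((m-1)(m-2))$ normalization gives
\[
\frac{2(m-1)p+2q-mp-2q}{m-2}+\frac{2q}{n}\|V^\top\|^2=p+\frac{2q}{n}\|V^\top\|^2,
\]
so the two routes agree. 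Inserting this into Theorem \ref{Thm-1} yields (\ref{quasi-cor-2}).
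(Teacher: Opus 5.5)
Your argument is correct and is essentially the paper's: the paper substitutes the Ricci tensor (\ref{RicN}) and scalar curvature (\ref{tauN}) into Theorem \ref{Thm-1}, which is exactly your cross-check, and your direct evaluation of $\widetilde K(e_i\wedge e_j)=p+q\big(\psi(e_i)^2+\psi(e_j)^2\big)$ is an equivalent shortcut to the same ambient term. The ``normalization issue'' you raise is not real: in this paper $\widetilde\tau$ denotes half of the full Ricci contraction (compare $\tau=\sum_{i<j}\sigma(e_i\wedge e_j)$ and (\ref{tauN}), where $2\widetilde\tau=m[(m-1)p+q]+(m-2)q$), so with that convention the term in Theorem \ref{Thm-1} is literally $\frac{2}{n(n-1)}\sum_{i<j}\widetilde K(e_i\wedge e_j)$ and nothing needs fixing (also, your factor $2/(n(m-1))$ should read $2/(n(n-1))$).
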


\begin{proof}
From (\ref{quasi}),
we have
\begin{equation}
\widetilde{\operatorname{Ric}}(X_{1},X_{2})=\left[ \left( m-1\right) p+q\right] \widetilde{g%
}(X_{1},X_{2})+\left( m-2\right) q\psi(X_{1})\psi(X_{2}) ,  \label{RicN}
\end{equation}%
for any vector fields $X_{1},X_{2}$ tangent to $\widetilde{M}^m$, and
\begin{equation}
2\widetilde{\tau }=m\left[ \left( m-1\right) p+q\right] +\left( m-2\right) q.
\label{tauN}
\end{equation}%
Substituting (\ref{RicN}) and (\ref{tauN}) into Theorem \ref{Thm-1}, we
obtain (\ref{quasi-cor-2}).
\end{proof}

As a consequence, we deduce:

\begin{corollary}
\label{Cor-2}
Under the hypotheses of Corollary \ref{Cor-22},

(i) if $V$ is tangent to $M^{n}$, then
\begin{equation*}
\rho+\rho ^{\perp }\leq \left\Vert H\right\Vert ^{2}+p+\frac{2q}{n};
\end{equation*}

(ii) if $V$ is normal to $M^{n}$, then%
\begin{equation*}
\rho+\rho ^{\perp }\leq \left\Vert H\right\Vert ^{2}+p.
\end{equation*}
\end{corollary}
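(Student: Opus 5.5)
The plan is to specialize inequality (\ref{quasi-cor-2}) of Corollary \ref{Cor-22} by computing $\left\Vert V^{\top}\right\Vert^{2}$ in the two cases. Everything else in the right-hand side, namely $\left\Vert H\right\Vert^{2}+p$, is independent of the position of $V$. So the whole argument reduces to identifying the tangential component of the unit vector field $V$ along $M^{n}$.

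Write $V=V^{\top}+V^{\perp}$ along $M^{n}$, with $V^{\top}$ tangent and $V^{\perp}$ normal to $M^{n}$. Since $V$ is a unit vector field, $1=\left\Vert V^{\top}\right\Vert^{2}+\left\Vert V^{\perp}\right\Vert^{2}$.

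In case (i), $V$ is tangent to $M^{n}$, so $V^{\perp}=0$ and $\left\Vert V^{\top}\right\Vert^{2}=1$. Substituting into (\ref{quasi-cor-2}) yields $\rho+\rho^{\perp}\leq\left\Vert H\right\Vert^{2}+p+\frac{2q}{n}$.

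In case (ii), $V$ is normal to $M^{n}$, so $V^{\top}=0$. The $q$-term then vanishes, leaving $\rho+\rho^{\perp}\leq\left\Vert H\right\Vert^{2}+p$.

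The only point needing care is the meaning of $\left\Vert V^{\top}\right\Vert^{2}$ in Corollary \ref{Cor-22}. There it arises as $\sum_{j=1}^{n}\psi(e_{j})^{2}=\sum_{j=1}^{n}\widetilde g(e_{j},V)^{2}$ for a local orthonormal tangent frame $\{e_j\}$, which is exactly the squared norm of the tangential projection of $V$. I expect no real obstacle: once this identification is granted, both cases are immediate substitutions.
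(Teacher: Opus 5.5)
Your proposal is correct and follows the paper's own (implicit) argument: the corollary is obtained by substituting $\|V^{\top}\|^2=\sum_{j}\psi(e_j)^2$ into (\ref{quasi-cor-2}), which equals $1$ when the unit field $V$ is tangent and $0$ when it is normal. Your identification of $\|V^{\top}\|^2$ through the tangential trace of (\ref{RicN}) is the right justification.
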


\begin{remark}
If $q=0$, then $(\widetilde{M}^{m},\widetilde{g})$ reduces to a space of
constant sectional curvature. Then, taking $q=0$ in Corollary \ref{Cor-22}
and Corollary \ref{Cor-2}, we get the generalized Wintgen inequality (\ref%
{wint}) which was obtained in \cite{GeTang} and \cite{Lu}.
\end{remark}

Using Corollary \ref{Cor-22}, we have the following non-existence results for
minimal submanifolds:

\begin{corollary}
\label{Cor3}
Under the hypotheses of Corollary \ref{Cor-22}, if
at a point $x\in M^{n}$,
\begin{equation*}
\rho +\rho ^{\perp }>p+\frac{2q}{n}\Vert V^{\top }\Vert ^{2},
\end{equation*}%
then $M^{n}$ is not minimal at $x$.

In particular, if
\begin{equation*}
p+\frac{2q}{n}\Vert V^{\top }\Vert ^{2}\leq 0
\end{equation*}%
and
\begin{equation*}
\rho +\rho ^{\perp }>0
\end{equation*}%
at some point of $M^{n}$, then $M^{n}$ is not minimal at that point.
\end{corollary}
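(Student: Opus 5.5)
The plan is to specialize Corollary \ref{Cor-22} to the minimal case and read off both assertions of Corollary \ref{Cor3} as contrapositives. First I would note that minimality at a point $x$ means $H(x)=0$. The inequality (\ref{quasi-cor-2}) is pointwise, so at such a point it reduces to
\begin{equation*}
\rho +\rho ^{\perp }\leq p+\frac{2q}{n}\Vert V^{\top }\Vert ^{2}\quad \text{at }x.
\end{equation*}
Here $V^{\top }$ denotes the tangential component of the unit vector field $V$ along $M^{n}$. It arises because $\sum_{j=1}^{n}\psi (e_{j})^{2}=\Vert V^{\top }\Vert ^{2}$ when one substitutes (\ref{RicN}) into Theorem \ref{Thm-1}.

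For the first assertion I would argue by contradiction. Suppose $M^{n}$ were minimal at $x$. Then the displayed inequality holds at $x$, which contradicts the hypothesis $\rho +\rho ^{\perp }>p+\frac{2q}{n}\Vert V^{\top }\Vert ^{2}$ at $x$. Hence $H(x)\neq 0$.

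For the ``in particular'' part I would combine the two hypotheses at the point in question. If $p+\frac{2q}{n}\Vert V^{\top }\Vert ^{2}\leq 0$ and $\rho +\rho ^{\perp }>0$ there, then
\begin{equation*}
\rho +\rho ^{\perp }>0\geq p+\frac{2q}{n}\Vert V^{\top }\Vert ^{2}
\end{equation*}
at that point. The first part then applies directly.

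There is no real obstacle. The only point needing care is that Corollary \ref{Cor-22} holds pointwise, so minimality is only needed at the single point $x$ rather than on all of $M^{n}$. This is justified because every step in its derivation, namely Lemma \ref{Lem-1} and the identity $\rho ^{\perp }=\rho _{N}$ from Theorem \ref{Thm-1}, is an algebraic identity or inequality at each point.
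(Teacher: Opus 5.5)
Your proposal is correct and matches the paper's argument: you set $H=0$ at $x$ in the pointwise inequality (\ref{quasi-cor-2}) and take the contrapositive, and you get the ``in particular'' part from $\rho+\rho^{\perp}>0\geq p+\frac{2q}{n}\Vert V^{\top}\Vert^{2}$. Your observation that the inequality holds pointwise, so that minimality is only needed at $x$, is a valid justification that the paper leaves implicit.
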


\begin{corollary}
Under the hypotheses of Corollary \ref{Cor-22},

(i) if $V$ is tangent to $M^{n}$ and
\begin{equation*}
\rho +\rho ^{\perp }>p+\frac{2q}{n}
\end{equation*}%
at a point $x\in M^{n}$, then $M^{n}$ is not minimal at $x$;

(ii) if $V$ is normal to $M^{n}$ and
\begin{equation*}
\rho +\rho ^{\perp }>p
\end{equation*}%
at a point $x\in M^{n}$, then $M^{n}$ is not minimal at $x$.
\end{corollary}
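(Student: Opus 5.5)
The statement follows directly from Corollary \ref{Cor-2}. For minimal $M^n$ we have $H=0$, so the Wintgen-type bound becomes $\rho+\rho^{\perp}\le p+\frac{2q}{n}\Vert V^{\top}\Vert^{2}$ in general. The plan is to apply this in the two special positions of $V$ and argue by contrapositive at the point $x$.

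For (i), suppose $V$ is tangent to $M^n$. Then $V^{\top}=V$, and since $V$ is a unit vector field, $\Vert V^{\top}\Vert^{2}=1$. This is exactly Corollary \ref{Cor-2}(i). If $M^n$ were minimal at $x$, then $\Vert H\Vert^{2}(x)=0$, and Corollary \ref{Cor-2}(i) would give
\begin{equation*}
(\rho+\rho^{\perp})(x)\le p(x)+\frac{2q(x)}{n}.
\end{equation*}
This contradicts the hypothesis $\rho+\rho^{\perp}>p+\frac{2q}{n}$ at $x$, so $H(x)\neq 0$.

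For (ii), suppose $V$ is normal to $M^n$. Then $V^{\top}=0$, and Corollary \ref{Cor-2}(ii) gives $\rho+\rho^{\perp}\le\Vert H\Vert^{2}+p$. Assuming minimality at $x$ yields $(\rho+\rho^{\perp})(x)\le p(x)$. This contradicts the strict inequality, so again $M^n$ is not minimal at $x$.

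There is no real obstacle here. The one point to check is that the inequality of Theorem \ref{Thm-1}, and hence those of Corollaries \ref{Cor-22} and \ref{Cor-2}, holds pointwise. This is true because its proof is purely algebraic at each point, through Lemma \ref{Lem-1}. Therefore minimality only at the single point $x$, meaning $H(x)=0$, is enough to run the argument. Alternatively, both parts follow immediately from the first part of Corollary \ref{Cor3} by substituting $\Vert V^{\top}\Vert^{2}=1$ or $0$.
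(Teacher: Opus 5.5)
Your proposal is correct and follows the same approach as the paper, which gives no separate proof and obtains this corollary by specializing Corollary \ref{Cor-2} (equivalently, the first part of Corollary \ref{Cor3}) with $H(x)=0$ and $\Vert V^{\top}\Vert^{2}=1$ or $0$. Your remark that the underlying inequality is pointwise, so $H(x)=0$ alone suffices, is the right justification for the ``at $x$'' formulation.
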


\begin{example}
Let $\left( M^{m-1}(c),g\right) $ be a real
space form of dimension $\left( m-1\right) $ and constant sectional
curvature $c$. Let $I\subseteq \mathbb{R}$ be an open interval and let $%
f:I\rightarrow \mathbb{R}$ be a nowhere-vanishing smooth function. Then the
warped product $\widetilde{M}^{m}=I\times _{f}M^{m-1}(c)$ endowed with the
metric $\widetilde{g}=dt^{2}+f^{2}g$ is a generalized Robertson--Walker
spacetime (see \cite{Al}). We denote by $\frac{\partial }{\partial t}$ the
unit vector field tangent to the factor $I$. Then the Riemannian curvature
tensor of $\left( \widetilde{M}^{m},\widetilde{g}\right) $ is given by%
\begin{align}
\widetilde{R}(X_{1},X_{2},X_{3},X_{4})& =\left( \frac{c-\left( f^{\prime
}\right) ^{2}}{f^{2}}\right) \left[ \widetilde{g}(X_{2},X_{3})\widetilde{g}%
(X_{1},X_{4})-\widetilde{g}(X_{1},X_{3})\widetilde{g}(X_{2},X_{4})\right]
\notag \\
& \hspace{-20pt}-\left( \frac{f^{\prime \prime }}{f}+\frac{c-\left(
f^{\prime }\right) ^{2}}{f^{2}}\right) \left[ -\widetilde{g}(X_{1},X_{3})%
\widetilde{g}\left( X_{2},\frac{\partial }{\partial t}\right) \widetilde{g}%
\left( X_{4},\frac{\partial }{\partial t}\right) \right.   \notag \\
& \hspace{-20pt}+\widetilde{g}(X_{2},X_{3})\widetilde{g}\left( X_{1},\frac{%
\partial }{\partial t}\right) \widetilde{g}\left( X_{4},\frac{\partial }{%
\partial t}\right)   \notag \\
& \hspace{-20pt}+\widetilde{g}(X_{1},X_{4})\widetilde{g}\left( X_{2},\frac{%
\partial }{\partial t}\right) \widetilde{g}\left( X_{3},\frac{\partial }{%
\partial t}\right)   \notag \\
& \left. \hspace{-20pt}-\widetilde{g}(X_{2},X_{4})\widetilde{g}\left( X_{1},%
\frac{\partial }{\partial t}\right) \widetilde{g}\left( X_{3},\frac{\partial
}{\partial t}\right) \right] ,  \label{warped}
\end{align}%
(see \cite{Lawn} and \cite{Roth}). Hence, $\widetilde{M}^{m}=I\times
_{f}M^{m-1}(c)$ is a manifold of quasi-constant curvature with associated
functions $\left( \frac{c-\left( f^{\prime }\right) ^{2}}{f^{2}}\right) $
and $-\left( \frac{f^{\prime \prime }}{f}+\frac{c-\left( f^{\prime }\right)
^{2}}{f^{2}}\right) $, respectively, a manifold of constant curvature
provided that $\frac{f^{\prime \prime }}{f}=-\frac{c-\left( f^{\prime
}\right) ^{2}}{f^{2}}$ is a constant. Therefore, we notice that, an $m$-dimensional generalized Robertson--Walker
spacetime $\widetilde{M}^{m}=I\times _{f}M^{m-1}(c)$ is a space of constant
curvature $-k\in \mathbb R$ if and only if $\frac{f^{\prime \prime }}{f}=-\frac{c-\left(
f^{\prime }\right) ^{2}}{f^{2}}=k$, equivalently, if and only if
one of the following assertions holds:

(i) $f(t)=at+b$, $a,b\in \mathbb{R}$ and $c=a^2\geq 0$; in particular, $%
M^{m-1}$ is a flat manifold if and only if $f$ is constant, in which case, $%
\widetilde{M}^{m}$ is a direct product manifold;

(ii) $f(t)=ae^{\sqrt{k}t}+be^{-\sqrt{k}t}$, $a,b\in\mathbb{R}$, $k>0$ and $c=-4abk$;

(iii) $f(t)=a\cos (\sqrt{-k}t)+b\sin (\sqrt{-k}t)$, $a,b\in \mathbb{R}$, $k<0
$ and $c=-k(a^{2}+b^{2})\geq 0$.
\end{example}

\begin{remark} In general, a spacetime conventionally means a Lorentzian manifold.
But in this section, we use the Riemannian analogue of a generalized Robertson–Walker spacetime.
\end{remark}

Now, by Corollary \ref{Cor-22}, we deduce the following result given in
\cite{Roth}:

\begin{proposition}\label{P-1}
Let $N^{n}$, $n\geq 3$, be an $n$-dimensional submanifold of an $m$%
-dimensional generalized Robertson--Walker spacetime $\widetilde{M}%
^{m}=I\times _{f}M^{m-1}(c).$

(i) If $\frac{\partial }{\partial t}$ is tangent to $N^{n}$, then
\begin{equation*}
\rho+\rho ^{\perp }\leq \left\Vert H\right\Vert ^{2}+\left( \frac{c-\left(
f^{\prime }\right) ^{2}}{f^{2}}\right) \left( 1-\frac{2}{n}\right) -\frac{2}{%
n}\frac{f^{\prime \prime }}{f}.
\end{equation*}

(ii) If $\frac{\partial }{\partial t}$ is normal to $N^{n}$, then%
\begin{equation*}
\rho +\rho ^{\perp }\leq \left\Vert H\right\Vert ^{2}+\frac{c-\left(
f^{\prime }\right) ^{2}}{f^{2}}.
\end{equation*}
\end{proposition}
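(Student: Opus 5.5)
The plan is to apply Corollary \ref{Cor-22} directly. By the preceding Example, the warped product $\widetilde{M}^{m}=I\times_{f}M^{m-1}(c)$ is a manifold of quasi-constant curvature whose data are
\begin{equation*}
p=\frac{c-(f')^{2}}{f^{2}},\qquad q=-\left(\frac{f''}{f}+\frac{c-(f')^{2}}{f^{2}}\right),\qquad V=\frac{\partial}{\partial t}.
\end{equation*}
To use this, I first check that (\ref{warped}) has exactly the form (\ref{quasi}) with $\psi(X)=\widetilde{g}(X,\partial_t)$. This amounts to matching the four $\psi\psi$-terms. The bracket in (\ref{warped}) contains the same four terms as in (\ref{quasi}), only listed in a different order, and the minus sign in front of the bracket is absorbed into $q$. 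I also note that $\partial_t$ is a unit vector field, as (\ref{T(X)}) requires.

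With this identification, Corollary \ref{Cor-22} gives
\begin{equation*}
\rho+\rho^{\perp}\leq\left\Vert H\right\Vert^{2}+p+\frac{2q}{n}\left\Vert V^{\top}\right\Vert^{2}.
\end{equation*}
The argument then splits into the two cases of the statement, following Corollary \ref{Cor-2}.

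In case (i), $\partial_t$ is tangent to $N^n$, so $\Vert V^{\top}\Vert^{2}=1$. The bound becomes $\Vert H\Vert^{2}+p+\frac{2q}{n}$. Substituting $p$ and $q$ gives
\begin{equation*}
p+\frac{2q}{n}=\frac{c-(f')^{2}}{f^{2}}-\frac{2}{n}\frac{f''}{f}-\frac{2}{n}\frac{c-(f')^{2}}{f^{2}}=\left(\frac{c-(f')^{2}}{f^{2}}\right)\left(1-\frac{2}{n}\right)-\frac{2}{n}\frac{f''}{f},
\end{equation*}
which is the claimed inequality.

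In case (ii), $\partial_t$ is normal to $N^n$, so $V^{\top}=0$. The bound reduces to $\Vert H\Vert^{2}+p$, which is exactly the claim.

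There is no real obstacle here. The only points needing care are the sign bookkeeping when identifying $q$ and confirming that the hypotheses $n\geq 3$ and $m\geq 4$ of Corollary \ref{Cor-22} are satisfied. The second holds automatically, since $m-1\geq n\geq 3$.
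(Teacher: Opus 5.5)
Your proposal is correct and follows the paper's route exactly: identify $I\times_f M^{m-1}(c)$ as a manifold of quasi-constant curvature with $p=\frac{c-(f')^2}{f^2}$, $q=-\bigl(\frac{f''}{f}+\frac{c-(f')^2}{f^2}\bigr)$ and $V=\partial_t$, then specialize Corollary \ref{Cor-22} (as in Corollary \ref{Cor-2}) to the tangent and normal cases. Your matching of the $\psi\psi$-terms in (\ref{warped}) with (\ref{quasi}) and your simplification of $p+\frac{2q}{n}$ are both right.
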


Using the above proposition, we have the following non-existence results for
minimal submanifolds:

\begin{corollary}
Under the hypotheses of Proposition \ref{P-1}, if at a point $x\in N^{n}$,
\begin{equation*}
\rho +\rho ^{\perp }>\frac{c-\left(f^{\prime }\right) ^{2}}{f^{2}}-\frac{2}{n}\left( \frac{%
f^{\prime \prime }}{f}+\frac{c-\left(f^{\prime }\right) ^{2}}{f^{2}}\right) \left\Vert
\left( \frac{\partial }{\partial t}\right) ^{\top }\right\Vert ^{2},
\end{equation*}%
then $N^{n}$ is not minimal at $x$.
\end{corollary}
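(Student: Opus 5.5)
The plan is to deduce the statement from Corollary \ref{Cor3} applied to the generalized Robertson--Walker spacetime. The Example shows that $\widetilde{M}^{m}=I\times _{f}M^{m-1}(c)$ is a manifold of quasi-constant curvature, and it identifies the data explicitly:
\[
p=\frac{c-(f')^{2}}{f^{2}},\qquad q=-\left(\frac{f''}{f}+\frac{c-(f')^{2}}{f^{2}}\right),\qquad V=\frac{\partial}{\partial t}.
\]
Here $V$ is a unit vector field, as required in (\ref{T(X)}). Since $m\geq 4$ and $n\geq 3$, the hypotheses of Corollary \ref{Cor-22} hold for $N^{n}$. Hence Corollary \ref{Cor3} applies pointwise.

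Next I substitute this data into the first assertion of Corollary \ref{Cor3}. The right-hand side $p+\frac{2q}{n}\Vert V^{\top}\Vert^{2}$ becomes
\[
\frac{c-(f')^{2}}{f^{2}}-\frac{2}{n}\left(\frac{f''}{f}+\frac{c-(f')^{2}}{f^{2}}\right)\left\Vert\left(\frac{\partial}{\partial t}\right)^{\top}\right\Vert^{2},
\]
which is exactly the threshold in the statement. Therefore, if $\rho+\rho^{\perp}$ exceeds this quantity at $x$, Corollary \ref{Cor3} gives that $N^{n}$ is not minimal at $x$.

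Equivalently, one can argue directly. Suppose $H=0$ at $x$. Then inequality (\ref{quasi-cor-2}) of Corollary \ref{Cor-22}, evaluated at $x$ with the above $p$, $q$, $V$, gives $\rho+\rho^{\perp}\leq p+\frac{2q}{n}\Vert V^{\top}\Vert^{2}$. This contradicts the hypothesis.

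The only point needing care is bookkeeping. The functions $p$ and $q$ are evaluated along $t$ at the image point of $x$, and $V^{\top}$ is the tangential component of $\partial/\partial t$ along $N^{n}$ at $x$. Also, (\ref{warped}) must match the form (\ref{quasi}) with the sign convention above. Comparing the two expressions term by term confirms this.

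As a consistency check, take $\Vert(\partial_t)^{\top}\Vert=1$. The threshold then reduces to the bound in Proposition \ref{P-1}(i) with $H=0$. Taking $\Vert(\partial_t)^{\top}\Vert=0$ reduces it to the bound in Proposition \ref{P-1}(ii) with $H=0$.
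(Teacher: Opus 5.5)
Your proof is correct and is essentially the paper's (implicit) argument: you substitute $p=\frac{c-(f')^2}{f^2}$, $q=-\bigl(\frac{f''}{f}+\frac{c-(f')^2}{f^2}\bigr)$, $V=\frac{\partial}{\partial t}$ into Corollary \ref{Cor3}, equivalently into (\ref{quasi-cor-2}) with $H=0$ at $x$. Citing Corollary \ref{Cor-22}/\ref{Cor3} rather than only Proposition \ref{P-1} is the more precise source, since the threshold involves a general $\Vert(\partial/\partial t)^{\top}\Vert^{2}$, whereas Proposition \ref{P-1} only records the tangent and normal cases.
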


\begin{corollary}
Under the hypotheses of Proposition \ref{P-1}, we suppose that along $N^{n}$,
\[
\frac{c-(f')^2}{f^2}
-\frac{2}{n}
\left(
\frac{f''}{f}+\frac{c-(f')^2}{f^2}
\right)
\left\|
\left(\frac{\partial}{\partial t}\right)^\top
\right\|^2
\leq 0.
\]
If
\[
\rho+\rho^\perp>0
\]
at some point of $N^{n}$, then $N^{n}$ is not minimal at
that point. In particular, there exists no minimal immersion satisfying the
above curvature condition and $\rho+\rho^\perp>0$ everywhere.
\end{corollary}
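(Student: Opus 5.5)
The plan is to reduce the statement to the pointwise inequality of Corollary \ref{Cor-22}, using the fact (from the preceding example) that $\widetilde{M}^{m}=I\times _{f}M^{m-1}(c)$ is a manifold of quasi-constant curvature. Comparing (\ref{warped}) with (\ref{quasi}), I will take the associated data to be
\[
p=\frac{c-(f')^{2}}{f^{2}},\qquad q=-\left(\frac{f''}{f}+\frac{c-(f')^{2}}{f^{2}}\right),\qquad V=\frac{\partial}{\partial t},
\]
so that $\psi(X)=\widetilde{g}(X,\frac{\partial}{\partial t})$. The vector field $\frac{\partial}{\partial t}$ is unit, as (\ref{T(X)}) requires. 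With these choices, the quantity $p+\frac{2q}{n}\Vert V^{\top}\Vert^{2}$ is exactly the left-hand side of the curvature hypothesis in the statement.

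The steps, in order, are as follows. First, fix a point $x\in N^{n}$ at which $\rho+\rho^{\perp}>0$. Second, apply Corollary \ref{Cor-22}, which holds pointwise because the underlying estimate in Lemma \ref{Lem-1} and Theorem \ref{Thm-1} is algebraic at each point. This gives at $x$
\[
\rho+\rho^{\perp}\leq \Vert H\Vert^{2}+\frac{c-(f')^{2}}{f^{2}}-\frac{2}{n}\left(\frac{f''}{f}+\frac{c-(f')^{2}}{f^{2}}\right)\left\Vert\left(\frac{\partial}{\partial t}\right)^{\top}\right\Vert^{2}.
\]
Third, argue by contradiction: suppose $N^{n}$ is minimal at $x$, i.e.\ $H(x)=0$. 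The right-hand side is then the curvature quantity, which is $\leq 0$ by hypothesis, so $\rho+\rho^{\perp}\leq 0$ at $x$. This contradicts $\rho+\rho^{\perp}>0$ at $x$, so $N^{n}$ is not minimal at $x$. Equivalently, this step is an application of the preceding corollary (or of Corollary \ref{Cor3}), since its strict inequality follows from $\rho+\rho^{\perp}>0\geq$ the curvature quantity.

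For the final assertion, suppose there were a minimal immersion satisfying the curvature condition with $\rho+\rho^{\perp}>0$ everywhere. Applying the pointwise conclusion at any single point gives a contradiction, so no such immersion exists.

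The only step needing care is the identification of $p$, $q$ and $V$ in the first step, in particular the sign of $q$ and the matching of the bracketed $\psi$-terms in (\ref{warped}) with those in (\ref{quasi}). Once that identification is checked, the rest is immediate.
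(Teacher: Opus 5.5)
Your proposal is correct and matches the argument the paper has in mind: you read off $p=\frac{c-(f')^2}{f^2}$, $q=-\bigl(\frac{f''}{f}+\frac{c-(f')^2}{f^2}\bigr)$ and $V=\frac{\partial}{\partial t}$ from (\ref{warped}), apply the pointwise inequality of Corollary \ref{Cor-22} with $H=0$, and use the sign hypothesis to contradict $\rho+\rho^\perp>0$. Invoking Corollary \ref{Cor-22} directly is actually the cleaner source, even though the paper attributes this corollary to Proposition \ref{P-1}, because that proposition only treats $\frac{\partial}{\partial t}$ tangent or normal, while the statement uses the general term $\bigl\|(\frac{\partial}{\partial t})^\top\bigr\|^2$.
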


\begin{example}
Let \((\overline{M}^{m},\overline{g})\), \(m\geq 4\), be a Riemannian
space form of constant sectional curvature \(c\). Let
\[
\widetilde{g}=e^{2u}\overline{g},
\]
where \(u\in C^{\infty}(\overline{M}^{m})\). Denote by
\(\overline{\nabla}\), \(\operatorname{grad}\), \(\overline{\Delta}\) and
\(\overline{\operatorname{Hess}}\) the Levi-Civita connection, the gradient, the
Laplacian and the Hessian with respect to \(\overline{g}\). We use the sign convention
\[
\overline{\Delta}u={\Div}(\operatorname{grad}u).
\]
Then the curvature tensor $\widetilde{R}$, the Ricci tensor $\widetilde{\operatorname{Ric}}$%
, and the scalar curvature $2\widetilde{\tau }$ of $(\overline{M}^{m},%
\widetilde{g})$ are given by
\begin{equation*}
\begin{split}
\widetilde{R}(X_{1},X_{2})X_{3}& =\overline{R}(X_{1},X_{2})X_{3}+(\overline{%
\nabla }_{X_{1}}B)(X_{2},X_{3})-(\overline{\nabla }_{X_{2}}B)(X_{1},X_{3}) \\
& \quad +B(X_{1},B(X_{2},X_{3}))-B(X_{2},B(X_{1},X_{3})),
\end{split}%
\end{equation*}%
where
\begin{equation*}
B(X_{1},X_{2})=X_{1}(u)X_{2}+X_{2}(u)X_{1}-\overline{g}(X_{1},X_{2})\operatorname{grad}u,
\end{equation*}%
\begin{equation}
\begin{split}
\widetilde{\operatorname{Ric}}(X_{1},X_{2})& =(m-1)c\,\overline{g}(X_{1},X_{2}) \\
& \quad -(m-2)\left\{ \overline{\operatorname{Hess}}\, u(X_{1},X_{2})-X_{1}(u)X_{2}(u)%
\right\}  \\
& \quad -\left\{ \overline{\Delta }u+(m-2)\Vert \operatorname{grad} u\Vert
^{2}\right\} \overline{g}(X_{1},X_{2}),
\end{split}
\label{Ric-last}
\end{equation}%
and
\begin{equation}
2\widetilde{\tau }=(m-1)e^{-2u}\left\{ mc-2\overline{\Delta }u-(m-2)\Vert
\operatorname{grad} u\Vert ^{2}\right\} ,  \label{taulast}
\end{equation}%
respectively, (see \cite{Besse}).
\end{example}

Since a real space form is conformally flat and the Weyl tensor is invariant under conformal changes of the metric, $(\overline{M}^{m},\widetilde{g})$ is also conformally flat (see \cite{Ch84}).
Hence, by using (\ref{Ric-last}) and (\ref{taulast}) in Theorem \ref{Thm-1}, we can state the following result:

\begin{proposition}\label{Cor-last}
Let $(M^m,g)$, $m\geq 4$, be a Riemannian space form of constant
sectional curvature $c$, and let
\[
\tilde{g}=e^{2u}g,
\]
where $u\in C^{\infty}(M^m)$. Let $N^n$, $n\geq 3$, be an
$n$-dimensional submanifold of $(M^m,\tilde{g})$. Then
\[
\rho+\rho^{\perp}\leq \|H\|^2
+e^{-2u}\left[
c+\frac{2}{n}\left\{\|\operatorname{grad}^{\top}u\|^2
-\Delta^{\top}u\right\}
-\|\operatorname{grad}u\|^2
\right],
\]
where $\operatorname{grad}^{\top}u$ denotes the tangential component of
$\operatorname{grad}u$ along $N^n$, and
\[
\Delta^{\top}u=\sum_{j=1}^{n}\operatorname{Hess}u(E_j,E_j),
\]
with respect to a local orthonormal tangent frame
$\{E_j\}_{1\leq j\leq n}$ on $N^n$ for the metric $g$. 
\end{proposition}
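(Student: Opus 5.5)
The plan is to apply Theorem \ref{Thm-1} directly to the submanifold $N^n$ of the conformally flat manifold $(M^m,\tilde g)$. All quantities $\rho$, $\rho^\perp$, $\|H\|^2$ are then computed for the metric induced by $\tilde g$. The only work is to evaluate the ambient curvature term
\[
\frac{2}{n(m-2)}\sum_{j=1}^{n}\widetilde{\operatorname{Ric}}(e_j,e_j)-\frac{2\widetilde{\tau}}{(m-1)(m-2)}
\]
using (\ref{Ric-last}) and (\ref{taulast}). Conformal flatness of $(M^m,\tilde g)$ has already been noted, so the hypotheses of Theorem \ref{Thm-1} hold.

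First I fix the frames. If $\{E_j\}$ is a $g$-orthonormal tangent frame on $N^n$, then $e_j=e^{-u}E_j$ is $\tilde g$-orthonormal. Since $\widetilde{\operatorname{Ric}}$ is bilinear,
\[
\sum_j \widetilde{\operatorname{Ric}}(e_j,e_j)=e^{-2u}\sum_j\widetilde{\operatorname{Ric}}(E_j,E_j).
\]
Next I trace (\ref{Ric-last}) over $\{E_j\}$. The term $\overline{g}(E_j,E_j)$ sums to $n$. The Hessian terms sum to $\Delta^\top u$, and $\sum_j E_j(u)^2=\|\operatorname{grad}^\top u\|^2$. This gives
\[
\sum_j\widetilde{\operatorname{Ric}}(E_j,E_j)=n(m-1)c-(m-2)\bigl(\Delta^\top u-\|\operatorname{grad}^\top u\|^2\bigr)-n\bigl(\overline{\Delta}u+(m-2)\|\operatorname{grad}u\|^2\bigr).
\]

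Then I multiply by $\frac{2}{n(m-2)}e^{-2u}$ and subtract $\frac{2\widetilde\tau}{(m-1)(m-2)}=\frac{e^{-2u}}{m-2}\bigl(mc-2\overline{\Delta}u-(m-2)\|\operatorname{grad}u\|^2\bigr)$, collecting terms by type inside the common factor $e^{-2u}$:
\begin{itemize}
\item the $c$-terms give $\frac{2(m-1)-m}{m-2}\,c=c$;
\item the $\overline{\Delta}u$ terms give $-\frac{2}{m-2}+\frac{2}{m-2}=0$, so they cancel;
\item the $\|\operatorname{grad}u\|^2$ terms give $-2+1=-1$;
\item the tangential terms remain as $\frac{2}{n}\bigl(\|\operatorname{grad}^\top u\|^2-\Delta^\top u\bigr)$.
\end{itemize}
Substituting this into Theorem \ref{Thm-1} yields exactly the stated inequality.

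The main obstacle is bookkeeping rather than concept. One must use the correct orthonormal frame, which produces the factor $e^{-2u}$, and keep gradient, Laplacian and Hessian with respect to $g$, as in (\ref{Ric-last}). The key point to check carefully is the cancellation of the full Laplacian $\overline{\Delta}u$.
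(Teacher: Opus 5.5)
Your proposal is correct and follows the paper's own route. Like the paper, you apply Theorem \ref{Thm-1} to $(M^m,\tilde g)$ with the $\tilde g$-orthonormal frame $e_j=e^{-u}E_j$ and substitute (\ref{Ric-last}) and (\ref{taulast}); your term-by-term bookkeeping is right ($c$-terms give $c$, the $\overline{\Delta}u$ terms cancel, the $\|\operatorname{grad}u\|^2$ terms give $-1$) and just makes explicit the computation the paper leaves to the reader.
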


Let us remark that if $\{E_j\}_{1\leq j\leq n}$ is a local orthonormal tangent frame on $N^n$ for the metric $g$, then
$\{e_j=e^{-u} E_j\}_{1\leq j\leq n}$ is a local orthonormal tangent frame on $N^n$
with respect to $\tilde{g}$. 
Moreover, all gradients, norms, Hessians, and Laplacians appearing in the
above inequality are computed with respect to $g$.

Using the above proposition, we have the following non-existence results for
minimal submanifolds:

\begin{corollary}
Under the hypotheses of Proposition \ref{Cor-last}, if at a point $x\in N^n$,
\[
\rho+\rho^\perp >
e^{-2u}\left[
c+\frac{2}{n}\left\{
\|\operatorname{grad}^\top u\|^2-\Delta^\top u
\right\}
-\|\operatorname{grad}u\|^2
\right],
\]
then $N^n$ is not minimal at $x$.
\end{corollary}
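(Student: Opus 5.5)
We argue by contraposition, using Proposition \ref{Cor-last} pointwise at $x$. That proposition was obtained by substituting (\ref{Ric-last}) and (\ref{taulast}) into Theorem \ref{Thm-1}, and Theorem \ref{Thm-1} is a pointwise inequality: its proof through Lemma \ref{Lem-1} only uses the algebraic inequality (\ref{2.3}) and the Gauss and Ricci equations at a single point. So at every point $y\in N^n$ we have
\[
\rho(y)+\rho^{\perp}(y)\leq \|H(y)\|^2+e^{-2u(y)}\left[c+\frac{2}{n}\left\{\|\operatorname{grad}^{\top}u\|^2-\Delta^{\top}u\right\}-\|\operatorname{grad}u\|^2\right](y).
\]

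Now suppose, to the contrary, that $N^n$ is minimal at $x$, meaning that the mean curvature vector vanishes there, $H(x)=0$. Evaluating the displayed inequality at $y=x$ kills the term $\|H(x)\|^2$. This gives
\[
\rho+\rho^{\perp}\leq e^{-2u}\left[c+\frac{2}{n}\left\{\|\operatorname{grad}^{\top}u\|^2-\Delta^{\top}u\right\}-\|\operatorname{grad}u\|^2\right]
\]
at $x$. This contradicts the strict inequality assumed in the statement, so $N^n$ cannot be minimal at $x$.

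The only point needing care is that the right-hand side is well defined at $x$, independently of the choice of orthonormal frame. The quantities $\|\operatorname{grad}^{\top}u\|^2$ and $\Delta^{\top}u=\sum_j\operatorname{Hess}u(E_j,E_j)$ are traces over $T_xN^n$ with respect to $g$, so they are frame-independent. The identification $e_j=e^{-u}E_j$ is consistent with the way the Ricci trace $\sum_j\widetilde{\operatorname{Ric}}(e_j,e_j)$ enters Theorem \ref{Thm-1}. Once this is checked, nothing beyond the evaluation above is needed, and I expect no real obstacle.
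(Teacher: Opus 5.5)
Your proposal is correct and takes essentially the same approach as the paper: the paper derives this corollary directly from the pointwise inequality of Proposition \ref{Cor-last}. Setting $H(x)=0$ there gives exactly the reverse of the assumed strict inequality, which is your contrapositive argument.
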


\begin{corollary}
Under the hypotheses of Proposition \ref{Cor-last}, we suppose that along $N^n$,
\[
c+\frac{2}{n}\left\{
\|\operatorname{grad}^\top u\|^2-\Delta^\top u
\right\}
-\|\operatorname{grad}u\|^2
\leq 0.
\]
If
\[
\rho+\rho^\perp>0
\]
at some point of $N^n$, then $N^n$ is not minimal at that point. In particular,
there exists no minimal immersion satisfying the above curvature condition
and $\rho+\rho^\perp>0$ everywhere.
\end{corollary}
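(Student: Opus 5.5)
The plan is to argue by contradiction, using the pointwise inequality of Proposition \ref{Cor-last} in the same way Corollary \ref{Cor3} was obtained from Corollary \ref{Cor-22}. Fix a point $x\in N^n$ at which $\rho+\rho^{\perp}>0$, and suppose $N^n$ is minimal at $x$, i.e.\ $H(x)=0$. The key observation is that the inequality of Proposition \ref{Cor-last} is pointwise. It comes from Theorem \ref{Thm-1}, hence from Lemma \ref{Lem-1}, whose proof is purely algebraic at each point. Therefore it may be evaluated at $x$ alone, and no global minimality is needed.

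Evaluating at $x$ with $\|H\|^2(x)=0$ gives
\[
(\rho+\rho^{\perp})(x)\leq e^{-2u(x)}\left[c+\frac{2}{n}\left\{\|\operatorname{grad}^{\top}u\|^2-\Delta^{\top}u\right\}-\|\operatorname{grad}u\|^2\right](x).
\]
The factor $e^{-2u(x)}$ is strictly positive. The bracket is $\leq 0$ at $x$ by the standing curvature hypothesis along $N^n$. Hence the right-hand side is $\leq 0$, which contradicts $(\rho+\rho^{\perp})(x)>0$. So $N^n$ is not minimal at $x$.

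For the \emph{in particular} statement, suppose a minimal immersion satisfies the curvature condition and has $\rho+\rho^{\perp}>0$ everywhere. Applying the above at any single point gives a contradiction.

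There is essentially no obstacle here. The only point needing care is the sign bookkeeping: the hypothesis is stated for the bracket without the conformal factor, so one must note that multiplying by $e^{-2u}>0$ preserves the sign, and that all quantities in the bracket are computed with respect to $g$, exactly as in Proposition \ref{Cor-last}.
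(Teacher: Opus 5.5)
Your proof is correct and is essentially the paper's argument. The paper gives no separate proof; it states the corollary as an immediate consequence of Proposition~\ref{Cor-last}, namely: evaluate that pointwise inequality at a point where $H=0$, note that $e^{-2u}>0$ and that the bracket is $\leq 0$, and this contradicts $\rho+\rho^{\perp}>0$ there, which is exactly your argument, including the observation that minimality is needed only at the point itself.
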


\begin{example}
Let $p,q\ge 2$, $c\neq0$ and consider the Riemannian product
\begin{equation*}
\widetilde{M}^{p+q}=M_1^p(c)\times M_2^q(-c),
\end{equation*}
endowed with the product metric $\widetilde{g}=g_1+g_2$, where $M_1^p(c)$ and
$M_2^q(-c)$ are real space forms of constant sectional curvatures $c$ and $%
-c $, respectively. Such a product (of two space forms
with opposite curvatures) is conformally flat, and both factors have
dimension at least $2$ (see \cite{Laf}).

Since $M_1^p(c)$ and $M_2^q(-c)$ are real space forms of constant sectional 
curvatures $c$ and $-c$, respectively, their Ricci tensors are

\[
\operatorname{Ric}_1=(p-1)c\,g_1,\qquad
\operatorname{Ric}_2=-(q-1)c\,g_2,
\]
respectively. For the product manifold, the Ricci tensor is given by:
\[
\widetilde{\operatorname{Ric}}(X_{1},X_{2})
=
\operatorname{Ric}_1((\pi_1)_*X_{1},(\pi_1)_*X_{2})
+
\operatorname{Ric}_2((\pi_2)_*X_{1},(\pi_2)_*X_{2}),
\]
where $\pi_1$ and $\pi_2$ are the projections. Therefore,
\begin{equation}
\widetilde{\operatorname{Ric}}(X_{1},X_{2})
=
(p-1)c\,g_1((\pi_1)_*X_{1},(\pi_1)_*X_{2})
-
(q-1)c\,g_2((\pi_2)_*X_{1},(\pi_2)_*X_{2}).
\label{Ricpro}
\end{equation}
\pagebreak

\noindent The scalar curvature $2\widetilde{\tau }$ is found as
\begin{equation}
2\widetilde{\tau }=c\,(p-q)(p+q-1). \label{scalpro}
\end{equation}
\end{example}

Let $N^n$, $n\geq 3$, be an $n$-dimensional submanifold of
$\widetilde{M}^{p+q}$. For a local orthonormal tangent frame
$\{e_{j}\}_{1\leq j\leq n}$ on $N^n$, we put
\[
\lambda=\sum_{j=1}^n\|(\pi_1)_*e_j\|^2,
\]
where $\pi_1:\widetilde{M}^{p+q}\to M_1^p(c)$ denotes the projection on the first component.

Now using Theorem \ref{Thm-1}, we have the following result:

\begin{proposition}\label{P-2}
Let $N^n$, $n\geq 3$, be an $n$-dimensional submanifold of the
Riemannian product
\[
\widetilde{M}^{p+q}=M_1^p(c)\times M_2^q(-c),
\]
where $p,q\geq 2$, $c\neq 0$, and $M_1^p(c)$ and $M_2^q(-c)$ are real
space forms of constant sectional curvatures $c$ and $-c$, respectively.
Then
\[
\rho+\rho^\perp
\leq
\|H\|^2
+
\frac{2c}{n(p+q-2)}
\left[(p-1)\lambda-(q-1)(n-\lambda)\right]
-
\frac{c(p-q)}{p+q-2}.
\]
\end{proposition}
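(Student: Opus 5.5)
The plan is to specialize Theorem \ref{Thm-1} to $\widetilde{M}^{p+q}=M_1^p(c)\times M_2^q(-c)$. This ambient space is conformally flat and has dimension $m=p+q\geq 4$, so the theorem applies. Two ambient quantities must be computed: the partial Ricci trace $\sum_{j=1}^n\widetilde{\operatorname{Ric}}(e_j,e_j)$ over a local orthonormal tangent frame of $N^n$, and the constant $\widetilde{\tau}$.

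For the first quantity, we use (\ref{Ricpro}). It gives
\[
\sum_{j=1}^n\widetilde{\operatorname{Ric}}(e_j,e_j)=(p-1)c\sum_{j=1}^n\|(\pi_1)_*e_j\|^2-(q-1)c\sum_{j=1}^n\|(\pi_2)_*e_j\|^2 .
\]
The product metric makes $T\widetilde{M}=TM_1\oplus TM_2$ an orthogonal splitting. Hence each unit vector satisfies $\|(\pi_1)_*e_j\|^2+\|(\pi_2)_*e_j\|^2=1$, so the second sum equals $n-\lambda$. Therefore
\[
\sum_{j=1}^n\widetilde{\operatorname{Ric}}(e_j,e_j)=c\left[(p-1)\lambda-(q-1)(n-\lambda)\right].
\]
We also note that $\lambda$ does not depend on the chosen frame, since it is the trace over $TN$ of the orthogonal projection onto $TM_1$.

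For the second quantity, (\ref{scalpro}) gives $2\widetilde{\tau}=c(p-q)(p+q-1)$. This follows from summing $p(p-1)c$ and $-q(q-1)c$ and factoring $p^2-q^2-(p-q)=(p-q)(p+q-1)$. With $m=p+q$ we get
\[
\frac{2\widetilde{\tau}}{(m-1)(m-2)}=\frac{c(p-q)(p+q-1)}{(p+q-1)(p+q-2)}=\frac{c(p-q)}{p+q-2}.
\]

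Substituting both quantities into the inequality of Theorem \ref{Thm-1}, with $m-2=p+q-2$, gives exactly the stated bound. The computation is routine. The only real point is the decomposition $\|(\pi_2)_*e_j\|^2=1-\|(\pi_1)_*e_j\|^2$, which rests on the orthogonality of the two factors in the product metric.
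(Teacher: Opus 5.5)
Your proposal is correct and follows the paper's proof essentially step for step: you apply Theorem \ref{Thm-1}, which is legitimate since the product is conformally flat with $m=p+q\geq 4$. You then evaluate $\sum_{j}\widetilde{\operatorname{Ric}}(e_j,e_j)=c\left[(p-1)\lambda-(q-1)(n-\lambda)\right]$ via (\ref{Ricpro}) and the orthogonal splitting $\|(\pi_1)_*e_j\|^2+\|(\pi_2)_*e_j\|^2=1$, and substitute (\ref{scalpro}) with $m=p+q$. Your extra verifications are all correct and only make explicit what the paper leaves implicit: the derivation of $2\widetilde{\tau}=c(p-q)(p+q-1)$, the cancellation $\frac{2\widetilde{\tau}}{(m-1)(m-2)}=\frac{c(p-q)}{p+q-2}$, and the frame-independence of $\lambda$.
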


\begin{proof}
Since
\[
\widetilde{M}^{p+q}=M_1^p(c)\times M_2^q(-c)
\]
is conformally flat, we can apply Theorem \ref{Thm-1}.
Hence, for a local orthonormal tangent frame $\{e_{j}\}_{1\leq j\leq n}$ on
$N^n$, using (\ref{Ricpro})
\[
\sum_{j=1}^n \widetilde{\operatorname{Ric}}(e_j,e_j)
=
(p-1)c\sum_{j=1}^n\|(\pi_1)_*e_j\|^2
-
(q-1)c\sum_{j=1}^n\|(\pi_2)_*e_j\|^2.
\]
Moreover, since the metric on $\widetilde{M}^{p+q}$ is the product metric,
we have
\[
\|e_j\|^2
=
\|(\pi_1)_*e_j\|^2+\|(\pi_2)_*e_j\|^2=1,
\]
and consequently
\[
\sum_{j=1}^n\|(\pi_2)_*e_j\|^2=n-\lambda.
\]
Therefore,
\[
\sum_{j=1}^n \widetilde{\operatorname{Ric}}(e_j,e_j)
=
c\left[(p-1)\lambda-(q-1)(n-\lambda)\right].
\]
Substituting this relation and the scalar curvature (\ref{scalpro}) into Theorem \ref{Thm-1}, with $m=p+q$, yields
\[
\rho+\rho^\perp
\leq
\|H\|^2
+
\frac{2c}{n(p+q-2)}
\left[(p-1)\lambda-(q-1)(n-\lambda)\right]
-
\frac{c(p-q)}{p+q-2}.
\]
\end{proof}

Using the above proposition, we have the following non-existence results for minimal submanifolds:

\begin{corollary}
Under the hypotheses of Proposition \ref{P-2}, if at a point $x\in N^n$,
\[
\rho+\rho^\perp
>
\frac{2c}{n(p+q-2)}
\left[(p-1)\lambda-(q-1)(n-\lambda)\right]
-
\frac{c(p-q)}{p+q-2},
\]
then $N^n$ is not minimal at $x$.
\end{corollary}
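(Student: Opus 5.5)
This is a direct contrapositive of Proposition \ref{P-2}, in the same way that the earlier non-existence corollaries followed from Corollary \ref{Cor-1} and Proposition \ref{P-1}. I argue by contradiction. Suppose $N^n$ is minimal at $x$, meaning $H(x)=0$. Proposition \ref{P-2} gives the pointwise inequality
\[
\rho+\rho^\perp\leq \|H\|^2+\frac{2c}{n(p+q-2)}\left[(p-1)\lambda-(q-1)(n-\lambda)\right]-\frac{c(p-q)}{p+q-2}
\]
at every point of $N^n$, and in particular at $x$. Setting $\|H\|^2=0$ at $x$ leaves
\[
\rho+\rho^\perp\leq \frac{2c}{n(p+q-2)}\left[(p-1)\lambda-(q-1)(n-\lambda)\right]-\frac{c(p-q)}{p+q-2}
\]
at $x$. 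This contradicts the strict reverse inequality assumed in the hypothesis, so $N^n$ is not minimal at $x$.

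The only point to check is that all quantities are genuinely pointwise, so that evaluating at the single point $x$ is legitimate. Proposition \ref{P-2} comes from Lemma \ref{Lem-1} and Theorem \ref{Thm-1}, whose proofs are algebraic identities and inequalities at each point. The function $\lambda=\sum_{j}\|(\pi_1)_*e_j\|^2$ is independent of the choice of orthonormal frame, because it equals the trace over $T_xN$ of the orthogonal projection onto the first factor. Therefore the right-hand side is a well-defined function on $N^n$, and I expect no real obstacle here.
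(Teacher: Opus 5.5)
Your proof is correct and follows the paper's route exactly: the paper obtains this corollary directly from Proposition \ref{P-2}, and your argument is the same pointwise contrapositive, setting $\|H\|^2=0$ at $x$. Your extra check that $\lambda$ is frame-independent (it is the trace of the projection onto the first factor, compressed to $T_xN^n$) is correct and harmless.
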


\begin{corollary}
Under the hypotheses of Proposition \ref{P-2}, we suppose that along $N^n$,
\[
\frac{2c}{n}
\left[(p-1)\lambda-(q-1)(n-\lambda)\right]
-
c(p-q)
\leq 0.
\]
If
\[
\rho+\rho^\perp>0
\]
at some point of $N^n$, then $N^n$ is not minimal at that point. In particular,
there exists no minimal immersion satisfying the above curvature condition
and $\rho+\rho^\perp>0$ everywhere.
\end{corollary}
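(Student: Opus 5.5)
The plan is to apply Proposition \ref{P-2} pointwise at the given point $x\in N^n$ and argue by contradiction. Suppose $N^n$ were minimal at $x$, so that $H(x)=0$. The inequality of Proposition \ref{P-2} is pointwise: it comes from Lemma \ref{Lem-1} and Theorem \ref{Thm-1}, which are algebraic estimates at each point. Evaluating it at $x$ therefore gives
\[
\rho+\rho^\perp
\leq
\frac{2c}{n(p+q-2)}
\left[(p-1)\lambda-(q-1)(n-\lambda)\right]
-
\frac{c(p-q)}{p+q-2}
\]
at $x$.

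The key step is to factor the right-hand side as
\[
\frac{1}{p+q-2}\left(\frac{2c}{n}\left[(p-1)\lambda-(q-1)(n-\lambda)\right]-c(p-q)\right).
\]
Since $p,q\geq 2$, we have $p+q-2\geq 2>0$, so the prefactor is positive. By the hypothesis assumed along $N^n$ (in particular at $x$), the bracketed quantity is $\leq 0$. Hence $\rho+\rho^\perp\leq 0$ at $x$, which contradicts the assumption $\rho+\rho^\perp>0$ at $x$. Therefore $N^n$ is not minimal at $x$.

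For the final assertion, suppose a minimal immersion satisfied the curvature condition and had $\rho+\rho^\perp>0$ everywhere. Then it would be non-minimal at every point, which is a contradiction. There is no real obstacle here. The only points needing care are the positivity of $p+q-2$, so that multiplying by it preserves the sign, and the observation that $\lambda$ is frame-independent: it equals the trace of the restriction to $T_xN$ of the projection onto $TM_1$. This makes the hypothesis well defined.
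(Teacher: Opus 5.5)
Your proof is correct and is the same argument the paper intends. The paper gives no separate proof; the corollary is just Proposition~\ref{P-2} with $H(x)=0$. Its right-hand side is then the hypothesized nonpositive quantity divided by $p+q-2>0$, which forces $\rho+\rho^\perp\le 0$ at $x$ and contradicts $\rho+\rho^\perp>0$. Your remark that $\lambda$ is frame-independent is also correct.
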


\section{\textbf{Conclusion}}

An important problem in the theory of submanifolds is to provide a
relationship between the intrinsic and the extrinsic invariants. Generalized
Wintgen inequality establishes a relationship between intrinsic and
extrinsic curvatures and the squared mean curvature of a submanifold.
Continuing the present studies, we have proven the corresponding generalized
Wintgen inequality for submanifolds in conformally flat manifolds. This
study can be continued, and other generalized Wintgen inequality for submanifolds
in different types of manifolds can be obtained.

\small{
\section*{Statements and Declarations}

\textbf{Funding.} No financial support was received for this manuscript.

\noindent\textbf{Author Contribution.} C.\" O. and A.M.B. wrote and approved
the final version of the manuscript.

\noindent\textbf{Conflict of Interest.} The authors declare that there is no
conflict of interests.

\noindent\textbf{Data Availability Statement.} Not applicable to the current
study.

\bigskip

\noindent {\bf Cihan \"{O}zg\"{u}r}\\
Department of Mathematics, Faculty of Science and Letters, \.Izmir Democracy University\\
\.Izmir, T\"URK\.IYE\\
ORCID ID: 0000-0002-4579-7151\\
e-mail: cihan.ozgur@idu.edu.tr

\bigskip

\noindent {\bf Adara M. Blaga} (corresponding author)\\
Department of Mathematics, Faculty of Physics and Mathematics, 
West University of Timi\c soara\\
Timi\c soara, ROMANIA\\
ORCID ID: 0000-0003-0237-3866\\
e-mail: adarablaga@yahoo.com
}
\end{document}